\DeclareOldFontCommand{\rm}{\normalfont\rmfamily}{\mathrm}
\DeclareOldFontCommand{\sf}{\normalfont\sffamily}{\mathsf}
\DeclareOldFontCommand{\tt}{\normalfont\ttfamily}{\mathtt}
\DeclareOldFontCommand{\bf}{\normalfont\bfseries}{\mathbf}
\DeclareOldFontCommand{\it}{\normalfont\itshape}{\mathit}
\DeclareOldFontCommand{\sl}{\normalfont\slshape}{\@nomath\sl}
\DeclareOldFontCommand{\sc}{\normalfont\scshape}{\@nomath\sc}
\def\l@lstlisting#1#2{\@dottedtocline{1}{0em}{1em}{\hspace{1,5em} Lst. #1}{#2}}
\DeclareMathAlphabet\mathcalbm{OMS}{cmsy}{b}{n}
\newtheoremstyle{normal}
{10pt}
{10pt}
{}
{}
{\bfseries}
{}
{0em}
{\bfseries{\thmname{#1}\thmnumber{ #2}\thmnote{\hspace{0em}(#3)\newline}}}
\newtheoremstyle{standard}  
  {10pt}   
  {}   
  {\itshape}  
  {}       
  {\bfseries} 
  {:}         
  {0.2cm}  
  {\bfseries{\thmname{#1}\thmnumber{ #2}\thmnote{ \hspace{0em}(#3)}}}          
\newtheoremstyle{mittitel}  
  {10pt}   
  {}   
  {\itshape}  
  {}       
  {\bfseries} 
  {:}         
  {0.2cm}  
  {\bfseries{\thmname{#1}\thmnumber{ #2}\thmnote{ \hspace{0em}(#3)\newline}}}          
\DeclareMathOperator{\ft}{\mathcal{F}}
\DeclareMathOperator{\et}{\mathcal{E}}
\DeclareMathOperator{\s}{\mathcal{S}}
\DeclareMathOperator{\D}{\mathcal{D}}
\DeclareMathOperator{\ii}{\textit{\textbf{i}}}
\DeclareMathOperator{\jj}{\textit{\textbf{j}}}
\newcommand{\supp}{\mathop{\mathrm{supp}}}
\begin{document}
\frenchspacing

\thispagestyle{empty}

\begin{center}
\Large{\textbf{SPECTRAL ASYMPTOTICS FOR KREIN-FELLER-OPERATORS WITH RESPECT TO RANDOM RECURSIVE CANTOR MEASURES}}
\\[15pt]
\normalsize{LENON A. MINORICS\footnote{ Institute of Stochastics and Applications, University of Stuttgart, Pfaffenwaldring 57, 70569 Stuttgart, Germany, Email: Lenon.Minorics@mathematik.uni-stuttgart.de}}
\end{center}

\titleformat{\section}{\large\filcenter\scshape}{}{1em}{\thesection. \hspace{0em}}
\titleformat{\subsection}[runin]{\bfseries}{}{0pt}{\thesubsection. \hspace{0em}}
\titleformat{name=\section,numberless}[block]{\large\scshape\centering}{}{0pt}{}

\theoremstyle{standard}
 
\newtheorem{theorem}{Theorem}[section]
\newtheorem{proposition}[theorem]{Proposition} 
\newtheorem{lemma}[theorem]{Lemma}
\newtheorem{corollary}[theorem]{Corollary} 
\newtheorem{definition}[theorem]{Definition} 
\newtheorem{construction}[theorem]{Construction}
\newtheorem{remark}[theorem]{Remark}
\newtheorem{example}[theorem]{Example}
\newtheorem{condition}[theorem]{Condition}
\newtheorem{assumption}[theorem]{Assumption}
\newtheorem{acknowledgement}[theorem]{Acknowledgement}

\vspace*{8pt}
\textbf{Abstract.} We study the limit behavior of the Dirichlet and Neumann eigenvalue counting function of generalized second order differential operators $\frac{d}{d \mu} \frac{d}{d x}$, where $\mu$ is a finite atomless Borel measure on some compact interval $[a,b]$. We firstly recall the results of the spectral asymptotics for these operators received so far. Afterwards, we give the spectral asymptotics for so called random recursive Cantor measures. Finally, we compare the results for random recursive and random homogeneous Cantor measures.

\section{Introduction}
It is well known that $f \in C^0([a,b], \mathbb{R})$ possesses a $L_2$ weak derivative $g \in \mathcal{L}_2(\lambda^1,[a,b])$, where $\lambda^1$ denotes the one dimensional Lebesgue measure, if and only if 
\begin{align*}
f(x) = f(a) + \int_a^x g(y) \, d y.
\end{align*}
Replacing the one dimensional Lebesgue measure by some measure $\mu$ leads to a generalized $L_2$ weak derivative depending on the measure $\mu$. Therefore, we let $\mu$ be a finite non-atomic Borel measure on some interval $[a,b]$, $-\infty < a < b < \infty$. 
The \textit{$\mu$-derivative} of $f: [a,b] \longrightarrow \mathbb{R}$ for which $f^\mu \in \mathcal{L}_2(\mu)$ exists such that
\begin{align*}
f(x) = f(a) + \int_a^x f^\mu(y) \, d\mu(y) ~~~ \text{ for all } x \in [a,b]
\end{align*}
is defined as the unique equivalence class of $f^\mu$ in $L_2(\mu)$. We denote this equivalence class by $\frac{d f}{ d \mu}$. The \textit{Krein-Feller-operator} $\frac{d}{d \mu} \frac{d}{d x} f$ is than given as the $\mu$-derivative of the $\lambda^1 _{|_{[a,b]}}$-\\derivative of $f$. 

This operator were introduced for example in \cite{Ito65}. \cite{Kue80}, \cite{Kue86}, \cite{Loe91}, \cite{Loe93} investigate on properties of the generated stochastic process, called quasi or gap diffusion, and related objects.
\\

As in e.g. \cite{Arz14}, \cite{Fuj87}, we are interested in the spectral asymptotics for generalized second order differential operators $\frac{d}{d \mu} \frac{d}{d x}$ with Dirichlet or Neumann boundary conditions, i.e. we study the equation
\begin{align}
\frac{d}{d \mu} \frac{d}{d x} f = - \lambda f \label{eigenequ}
\end{align}
with
\begin{align*}
f(a) = f(b) = 0 ~~~~~ \text{or} ~~~~~ f'(a) = f'(b) = 0. 
\end{align*}

For a physical motivation, we consider a flexible string which is clamped between two points $a$ and $b$. If we deflect the string, a tension force drives the string back towards its state of equilibrium. Mathematically, the deviation of the string is described by some solution $u$ of the one dimensional wave equation
\begin{align*}
\frac{\rho(x)}{F} \frac{\partial^2 u(t,x)}{\partial t^2} = \frac{\partial^2 u(t,x)}{\partial x^2}, ~~~ x \in [a,b], ~ t \in [0,\infty)
\end{align*}
with Dirichlet boundary condition $u(t,a) = u(t,b) = 0$ for all $t$. Hereby, $\rho$ is given as the density of the mass distribution of the string and $F$ as the tangential acting tension force. To solve this equation, we make the ansatz $u(t,x) = \psi(t) \, \phi(x)$ and receive 
\begin{align*}
\frac{\psi '' (t)}{F \, \psi(t)} = \frac{\phi''(x)}{\phi (x) \, \rho(x)} = - \lambda,
\end{align*}
for some constant $\lambda \in \mathbb{R}$. In the following, we only consider the equation

\begin{align*}
\frac{\phi''(x)}{\phi  (x) \rho(x)} = - \lambda.
\end{align*}
Thus, we have
\begin{align*}
\phi'(t) - \phi '(a) = - \lambda \int_a^t \phi(y) \, d\mu (y),
\end{align*}
where $\mu$ is the mass distribution of the string. In other words,
\begin{align}
\frac{d}{d \mu} \frac{d}{d x} \phi = - \lambda \, \phi. \label{eigequ}
\end{align}
This equation no longer involves the density $\rho$, meaning that we can reformulate the problem for singular measures $\mu$.  Such a solution $\phi$ can be regarded as the shape of the string at some fixed time $t$. Up to a multiplicative constant, the natural frequencies of the string are given as the square root of the eigenvalues of \eqref{eigequ}.

In Freiberg \cite{Fre03} analytic properties of this operator are developed. There, it is shown that $-\frac{d}{d \mu} \frac{d}{d x}$ with Dirichlet or Neumann boundary conditions has a pure point spectrum and no finite accumulation points. Moreover, the eigenvalues are non-negative and have finite multiplicity. \\ We denote the sequence of Dirichlet eigenvalues of $ -\frac{d}{d \mu} \frac{d}{d x}$ by $\left(\lambda^\mu_{D,n}\right)_{n \in \mathbb{N}}$ and the sequence of Neumann eigenvalues by $\left(\lambda^\mu_{N,n}\right)_{n \in \mathbb{N}_0}$, where we assort the eigenvalues ascending and count them according to multiplicities. Let 
\begin{align*}
N_D^\mu (x) \coloneqq \# \left \{i \in \mathbb{N}: ~ \lambda^\mu_{D,i} \le x \right\} ~~~ \text{ and } ~~~ N_N^\mu(x) \coloneqq \# \left \{i \in \mathbb{N}_0: ~ \lambda^\mu_{N,i} \le x \right\}.
\end{align*}
$N_D^\mu$ and $N_N^\mu$ are called the Dirichlet and Neumann eigenvalue counting function of $-\frac{d}{d \mu} \frac{d}{d x}$, respectively. 
The problem of determining $\gamma > 0$ such that
\begin{align}
N_{D/N}^\mu(x) \asymp x^\gamma, ~~~ x \rightarrow \infty,
\end{align}
is an extension of the analogous problem for the one dimensional Laplacian. The following theorem is a well-known result of Weyl \cite{Wey15}.
\begin{theorem} \label{weyls law}
Let $\Omega \subseteq \mathbb{R}^n$ be a domain with smooth boundary $\partial \Omega$. Consider the eigenvalue problem
\begin{align*}
\begin{cases}
- \Delta_{n,\Omega} u &= \lambda u ~ \text{ on } ~\Omega, \\
u_{|\partial\Omega} &= 0,
\end{cases}
\end{align*}
where $\Delta_{n,\Omega}$ denotes the Laplace operator on $\Omega$. Then, for the Dirichlet eigenvalue counting function $N_D^{({n,\Omega})}$ of $\Delta_{n,\Omega}$ it holds that
\begin{align}
N_D^{({n,\Omega})}(x) = (2\pi)^{-n} \, c_n \, \textit{vol}_n(\Omega) \, x^{n/2} + o\left(x^{n/2}\right), ~~~ x\rightarrow \infty, \label{weyls formula}
\end{align}
hereby $c_n$ denotes the volume of the $n$-dimensional unit ball.
\end{theorem}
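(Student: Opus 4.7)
The plan is to establish Weyl's law by the classical Dirichlet--Neumann bracketing technique, reducing the general statement to the case of a cube where the eigenvalues can be computed explicitly. First I would treat $\Omega = Q_\ell = (0,\ell)^n$. Separation of variables produces the Dirichlet eigenvalues
\begin{align*}
\lambda_{k_1,\ldots,k_n} = \pi^2 \ell^{-2} (k_1^2 + \cdots + k_n^2), \quad (k_1,\ldots,k_n) \in \mathbb{N}^n,
\end{align*}
with products of sines as eigenfunctions. Counting these below $x$ amounts to counting points of $\mathbb{N}^n$ inside the ball of radius $\pi^{-1} \ell \sqrt{x}$, and a standard lattice--point argument (the count is comparable to the volume of the positive orthant of that ball, with error of the order of the bounding sphere) gives
\begin{align*}
N_D^{(n,Q_\ell)}(x) = (2\pi)^{-n} c_n \operatorname{vol}_n(Q_\ell)\, x^{n/2} + o\!\left(x^{n/2}\right).
\end{align*}
The analogous formula for $N_N^{(n,Q_\ell)}$ follows by replacing $\mathbb{N}$ with $\mathbb{N}_0$, which only alters the error term.

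Next I would invoke Dirichlet--Neumann bracketing: if $\Omega$ is partitioned (up to a set of measure zero) into disjoint open pieces $\Omega_1,\ldots,\Omega_m$, the min--max characterization applied to the quadratic form of $-\Delta_{n,\Omega}$ yields
\begin{align*}
\sum_{j=1}^m N_D^{(n,\Omega_j)}(x) \le N_D^{(n,\Omega)}(x) \le N_N^{(n,\Omega)}(x) \le \sum_{j=1}^m N_N^{(n,\Omega_j)}(x).
\end{align*}
For a mesh size $h > 0$, let $\mathcal{C}_h^-$ be the collection of closed cubes of side $h$ from the lattice $h\mathbb{Z}^n$ contained in $\Omega$ and $\mathcal{C}_h^+$ those meeting $\overline{\Omega}$. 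The interior cubes give a lower bound via Dirichlet bracketing, while replacing $\Omega$ by the larger region $\bigcup \mathcal{C}_h^+$ combined with Neumann bracketing produces a matching upper bound; the smoothness of $\partial\Omega$ ensures $\operatorname{vol}_n\!\big(\bigcup \mathcal{C}_h^+ \setminus \bigcup \mathcal{C}_h^-\big) = O(h)$ as $h \to 0$.

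The main obstacle is the correct order of limits. For fixed $h$ I would first send $x \to \infty$ and apply the cube asymptotics to obtain
\begin{align*}
\liminf_{x\to\infty} x^{-n/2} N_D^{(n,\Omega)}(x) \ge (2\pi)^{-n} c_n \operatorname{vol}_n\!\Big(\bigcup \mathcal{C}_h^-\Big),
\end{align*}
together with the analogous upper bound in terms of $\operatorname{vol}_n(\bigcup \mathcal{C}_h^+)$; only afterwards does one let $h \to 0$ and use the boundary regularity to squeeze both sides to $(2\pi)^{-n} c_n \operatorname{vol}_n(\Omega)\, x^{n/2}$. A secondary technical point is verifying the bracketing inequalities themselves: the Dirichlet side uses that extension by zero embeds $H^1_0(\Omega_j)$ into $H^1_0(\Omega)$, while the Neumann side uses that every $H^1(\Omega)$-function restricts to an $H^1(\Omega_j)$-function, so the Rayleigh quotients can only grow after passing to the finer partition. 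Both are standard once the problem is phrased in terms of form domains, and combining them with the cube asymptotics delivers \eqref{weyls formula}.
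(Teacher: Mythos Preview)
Your sketch is the standard Dirichlet--Neumann bracketing proof and is correct in outline. Note, however, that the paper does not prove Theorem~\ref{weyls law} at all: it is quoted as a classical result of Weyl with a reference to \cite{Wey15}, purely for motivation. So there is nothing to compare against; your argument simply supplies a proof where the paper gives none.
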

Choosing $\mu = \lambda_{|_{[a,b]}}^1$ leads to
\begin{align*}
N^{\mu}_{D}(x) = N_D^{(1,(a,b))}(x) \asymp x^{1/2}, ~~~ x \rightarrow \infty,
\end{align*}
which gives the leading order term in the Weyl asymptotics as in Theorem \ref{weyls law}. 
\eqref{weyls formula} motivates the definition of the spectral dimension 
\begin{align}
\frac{d_s(\Omega)}{2} \coloneqq \lim_{\lambda \rightarrow \infty} \frac{\log N_D^{(n,\Omega)}(\lambda)}{\log \lambda}. \label{spec dim}
\end{align}
Which leads to 
\begin{align*}
d_s(\Omega) = n
\end{align*}
in Theorem \ref{weyls law}.
Many authors before studied the expression \eqref{spec dim} for generalized Laplacians on p.c.f. fractals, e.g. \cite{Fre15}, \cite{Ham00}, \cite{Kig93}.
In this paper, we investigate on this expression for the Krein-Feller-operator on so called random recursive Cantor sets. Therefore, we call the limit
\begin{align*}
\gamma \coloneqq \gamma(\mu) \coloneqq \lim_{\lambda \rightarrow \infty} \frac{\log N_D^\mu(\lambda)}{\log \lambda}
\end{align*}
the \textit{spectral exponent} of the corresponding Krein-Feller-operator. \\

The spectral asymptotics for Krein-Feller-operators with respect to self similar measures was developed by Fujita \cite{Fuj87}, more general by Freiberg \cite{Fre05} and with respect to random (and deterministic) homogeneous Cantor measures by Arzt \cite{Arz14}. 

We give an example of a random recursive Cantor set and a corresponding random recursive Cantor measure. In Section \ref{constr frac} we define the general class. The fractal is constructed as follows: we subdivide the unit interval with probability $p$ into three intervals with equal lengths, where we remove the open middle third interval and with probability $1-p$ into five intervals with equal lengths, where we remove the open second and fourth interval. In the next step, we subdivide the remaining intervals independent from each other likewise and continue the procedure. The fractal under consideration is the limiting set, called random $\frac{1}{3}$-$\frac{1}{5}$-recursive Cantor set.
\vspace*{1em}
\begin{figure}[ht]
\centering
\includegraphics[scale=0.7]{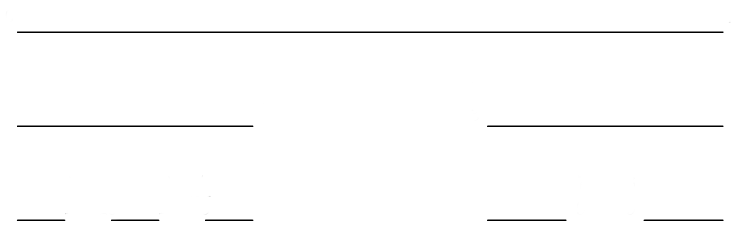}
\caption{First two approximation steps of one possible random $\frac{1}{3}$-$\frac{1}{5}$-recursive Cantor set}
\end{figure}

Afterwards, we construct probability measures $\mu_n$, $n \in \mathbb{N}$ such that $\mu_n$ is a weighted Lebesgue measure those support is given by the $n$-th approximation step of the random $\frac{1}{3}$-$\frac{1}{5}$-recursive Cantor set. To this end, let $m^{(j)} = \left(m_1^{(j)},..,m_{N_{j}}^{(j)}\right)$, $j=1,2$, $N_1 = 2, N_2 =3$ be vectors of weights, i.e. $\sum_{i=1}^{N_j} m_i^{(j)} = 1$, $m_i \in (0,1)$, $i=1,..,N_j$, $j=1,2$.
$\mu_1$ weights the left remaining interval by $m_1^{(1)}$ and the right by $m_2^{(1)}$, if we subdivided the unit interval into three parts, else it weights the left interval by $m_1^{(2)}$, the middle interval by $m_2^{(2)}$ and the right by $m_3^{(2)}$. $\mu_2$ weights an interval by the weight of the predecessor interval multiplied by the weight according to the procedure for $n=1$. Recursively, we continue this construction.\\
A random recursive Cantor measure $\mu^{\left(\frac{1}{3},\frac{1}{5}\right)}$ corresponding to the $\frac{1}{3}$-$\frac{1}{5}$-recursive Cantor set is given as the weak limit of the sequence $\left( \mu_n \right)_{n \in \mathbb{N}}$.
\begin{figure}[ht]
\centering
\includegraphics[scale=0.7]{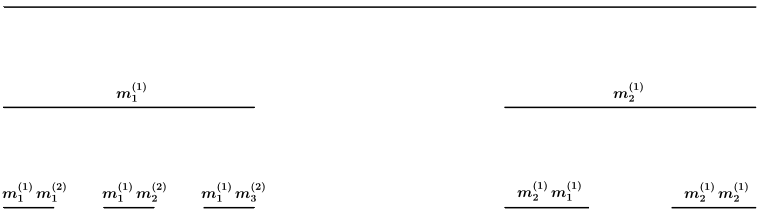}
\caption{First two approximation steps of $\mu^{\left(\frac{1}{3},\frac{1}{5}\right)}$}
\end{figure}

It turns out that under some regularity conditions for the solution $\gamma > 0$ of
\begin{align*}
\mathbb{E}\left( \sum_{i=1}^{N_{U_\emptyset}} \left(r_i^{(U_\emptyset)}\, m_i^{(U_\emptyset)}\right)^\gamma \right) = 1, 
\end{align*}
 there exists a constant $C>0$ and a random variable $W>0$ a.s., $\mathbb{E}W = 1$ such that
\begin{align}
N_{D/N}^\mu(x) \, x^{-\gamma} \longrightarrow C\,W ~~~~a.s. \label{nonlattice}
\end{align}
or there exists a deterministic periodic function $G$ such that
\begin{align}
N_{D/N}^{\mu}(x) = (G(\log(x)) + o(1))\,x^{\gamma}\,W ~~~~ a.s., \label{lattice}
\end{align}
where $\mu$ is a random recursive Cantor measure. Hereby $U_\emptyset$ is the unique ancestor of the underlying random tree, $N_{U_\emptyset}$ is the corresponding number of self similarities, $r_i^{(U_\emptyset)}$ are the corresponding scale factors and $m_i^{(U_\emptyset)}$ are the entries of the corresponding vector of weights.

Since the eigenvalue counting functions are branching processes, they fulfill a random version of the renewal equation of \cite{Fel66}. The constant $C$ in \eqref{nonlattice} is given as the limit of $\mathbb{E}\left(N_{D/N}^\mu(x) \, x^{-\gamma} \right)$. The random variable $W$ is the limit of the fundamental martingale of the underlying random population. The strict positivity of $W$ follows by an $x \log x$ argument, standard in branching theory.\\ It is an open question whether there exists a non-trivial example in \eqref{lattice} or not. 

For the random $\frac{1}{3}$-$\frac{1}{5}$-recursive Cantor set we thus receive that either \eqref{nonlattice} or \eqref{lattice} is satisfied,
where $\gamma > 0$ is the unique solution of
\begin{align*}
p \, \left( \left( \frac{m_1^{(1)}}{3} \right)^\gamma + \left( \frac{m_2^{(1)}}{3} \right)^\gamma \right) + (1-p) \, \left( \left( \frac{m_1^{(2)}}{5} \right)^\gamma + \left( \frac{m_2^{(2)}}{5} \right)^\gamma + \left( \frac{m_3^{(2)}}{5} \right)^\gamma \right) = 1.
\end{align*}
We denote by $\gamma_0$ the spectral exponent for $p=0$ and by $\gamma_1$ for $p=1$. For every $\gamma \in [\gamma_0,\gamma_1]$ there thus exists a $p$ such that $\gamma$ is the corresponding spectral exponent. Therefore, we can construct a tailored string those spectral exponent is an arbitrary $\gamma \in [\gamma_0,\gamma_1]$, where the support of this string is then given by some random $\frac{1}{3}$-$\frac{1}{5}$-recursive Cantor set.

The paper is organized as follows. In Section \ref{prelim} we give the definition of the operator under consideration and recap the important results received so far. Section \ref{branching} is dedicated to the C-M-J branching processes. The convergence results for these types of branching processes we need are given there. We use them to establish the spectral asymptotics for the eigenvalue counting functions. Then, in Section \ref{section asympotic}, we firstly define the measures under consideration and proof afterwards the main theorem. Finally, we compare the spectral exponent for random homogeneous and random recursive Cantor measures. It will be shown that iff $W\neq 1$ a.s., the spectral exponent for random recursive Cantor measures is strictly bigger than the spectral exponent for random homogeneous Cantor measures. We illustrate this fact by some examples.

\section{Preliminaries} \label{prelim}
\subsection{Definition of the Krein-Feller-Operator.} \label{ablmu}
Let $\mu$ be a finite non-atomic Borel measure on $[a,b]$, $-\infty < a < b < \infty$ and 
\begin{align*}
\D_1^\mu \coloneqq \bigg\{f: [a,b] \longrightarrow \mathbb{R}: ~ \exists ~ f^\mu \in ~&\mathcal{L}_2(\mu): \\& f(x) = f(a) + \int_a^x f^{\mu}(y) ~ d \mu (y), ~~~ x \in [a,b] \bigg\}.
\end{align*}
The $ \mu$\textit{-derivative} of $f$ is defined as the equivalence class of $f^\mu$ in $L_2(\mu)$. It is known (see \cite[Corollary 6.4]{Fre03}) that this equivalence class is unique. Thus, the operator 
\begin{align*}
\frac{d}{d \mu} : \D_1^\mu &\longrightarrow L_2(\mu), \\
f &\mapsto ~~ [f^\mu]_{\sim_\mu} 
\end{align*}
is well-defined.
Let
\begin{align*}
\D \coloneqq \D_2^{\mu , \lambda^1}\coloneqq \bigg\{ f \in C^1((a,b)) \cap C^0([a,b]) :& ~ \exists ~ (f')^\mu \in \mathcal{L}_2\left( \mu \right): \\& f'(x) = f'(0) + \int_a^x (f')^\mu(y) ~  d \mu (y), ~~~ x \in [a,b]  \bigg\}.
\end{align*}
The \textit{Krein-Feller-operator w.r.t.} $ \mu$ is given as 
\begin{align*}
\frac{d}{d \mu} \frac{d}{d x}: \D &\longrightarrow  L_2(\mu) \\
f &\mapsto ~~ [(f')^\mu]_{\sim_\mu}.
\end{align*}

\subsection{Spectral Asymptotics for Self-Similar and Random Homogeneous Cantor Measures.} \label{self sim and hom case}
As mentioned in the introduction, the spectral asymptotics for Krein-Feller-operators were discovered by \cite{Fuj87} and \cite{Arz14} for special types of measures. In this section we summarize some main results.  Firstly, we consider self-similar measures, treated in \cite{Fuj87}. Therefore, let $\s = \{S_1,...,S_N\}$, $N \ge 2$ be an iterated function system given by
\begin{align*}
S_i(x) = r_i \, x + c_i, ~~~ x \in [a,b],
\end{align*}
whereby $r_i \in (0,1)$, $c_i \in \mathbb{R}$ are constants such that the open set condition is satisfies, $S_i[a,b] \subseteq [a,b]$ for all $i$ and let $m = (m_1,...,m_N)$ be a vector of weights. As shown in \cite{Hut81}, there exists a unique non-empty compact set $C=C(\s) \subseteq [a,b]$ such that $\bigcup_{i=1}^N S_i(C) = C$ and a unique Borel probability measure $\mu = \mu(\s,m)$ such that $\mu = \sum_{i=1}^N m_i \, \mu \circ S_i^{-1}$. Moreover it holds $\supp \mu = C$. We call $C$ self-similar w.r.t. $\s$ and $\mu$ self-similar w.r.t. $\s$ and $m$. The Hausdorff dimension of $C$ is given by the unique solution $d \in [0,1]$ of $\sum_{i=1}^N r_i^{d} = 1$ and it holds $\mathcal{H}^{d}(C) \in (0,\infty)$. Moreover, if $m_i = r_i^{d}$ for all $i$, we have $\mu = \mathcal{H}^d(C)^{-1} \, \mathcal{H}^d_{|_{C}}$. In this setting, the spectral exponent of the corresponding Krein-Feller-operator is the unique solution $\gamma > 0$ of $\sum_{i=1}^N \left( m_i \, r_i \right)^\gamma = 1$. For references see \cite[Theorem 3.6]{Fuj87} and \cite[Theorem 4.1]{Fre05}. 
\\\\
In the following, we want to relax the self similarity of the set $C$ and the measure $\mu$. To this end, we take an index set $J$ and define to each $j \in J$ an IFS $\s^{(j)}=\left\{S_1^{(j)},...,S_{N_j}^{(j)} \right\}$. Then, we choose randomly $j_0 \in J$ (according to some probability distribution on $J$) and take the image of $[a,b]$ under $\s^{(j_0)}$. Next, we choose randomly $j_1 \in J$ (according to the same probability distribution) and take the image of $S_1^{(j_0)}[a,b],...,S_{N_{j_0}}^{(j_0)}[a,b]$ under $\s^{(j_1)}$. The limit of this construction is the fractal under consideration. More precise, let $J$ be a non-empty countable set. To each $j \in J$ let $\s^{(j)}=\left\{S_1^{(j)},...,S_{N_j}^{(j)} \right\}$, $N_j \in \mathbb{N}$ be such that 
\begin{align*}
S_i^{(j)}(x) = r_i^{(j)} \, x + c_i^{(j)}, ~~~ x \in [a,b], ~ i=1,...,N_j,
\end{align*}
where the constants $r_i^{(j)} \in (0,1)$, $c_i^{(j)} \in \mathbb{R}$ are chosen such that
\begin{align} \label{assumption IFS}
a = S_1^{(j)}(a) < S_1^{(j)} (b) \le S_2^{(j)}(a) < \cdots < S_{N_j}^{(j)}(b) = b. 
\end{align}
Further, we call $\xi = \left(\xi_1, \xi_2 ,... \right)$, $\xi_i \in J$ an environment sequence and define 
\begin{align*}
W_n \coloneqq \left\{1 ,..., N_{\xi_1} \right\} \times  \left\{1 ,..., N_{\xi_2} \right\} \times \cdots \times \left\{1 ,..., N_{\xi_n} \right\}, ~~~ n \in \mathbb{N}. 
\end{align*} 
The homogeneous Cantor set to a given environment sequence $\xi$ is
\begin{align*}
K^{(\xi)} \coloneqq \bigcap_{n=1}^\infty \bigcup_{w \in W_n} \left(S_{w_1}^{(\xi_1)} \circ S_{w_2}^{(\xi_2)} \circ \cdots \circ S_{w_n}^{(\xi_n)} \right)([a,b]). 
\end{align*}
Next, we define a measure $\mu^{(\xi)}$ on $[a,b]$ to a given environment sequence $\xi$, which generalizes the invariant measures, presented before. To this end, let $m^{(j)}= (m_1^{(j)},...,m_{N_j}^{(j)})$, $j \in J$ be a vector of weights. $\mu^{(\xi)}$ is defined as the week limit of the sequence of Borel probability measures $\left(\mu_n^{(\xi)}\right)_{n \in \mathbb{N}}$,
\begin{align*}
\mu_n^{(\xi)} \coloneqq \sum_{w \in W_n} m_{w_1}^{(\xi_1)} \cdots m_{w_n}^{(\xi_n)} \, \mu_0 \circ \left(S_{w_1}^{(\xi_1)} \circ \cdots \circ S_{w_n}^{(\xi_n)} \right)^{-1}, ~~~~~ \mu_0 \coloneqq \frac{1}{b-a} \, \lambda^1_{|_{[a,b]}}.
\end{align*}
$\mu^{(\xi)}$ is called homogeneous Cantor measure, corresponding to $K^{(\xi)}$. If $|J| = 1$, then the definition of invariant sets and measures coincide with $K^{(\xi)}$ and $\mu^{(\xi)}$.\\
\cite[Theorem 3.3.10]{Arz14} makes a statement about the spectral exponent of the Krein-Feller-operator with respect to $\mu^{(\xi)}$, where $\xi$ is a deterministic environment sequence. Here, we only consider the random case. Therefore, let $(\Omega,\mathcal{F},\mathbb{P})$ be a probability space and $\xi = (\xi_1,\xi_2,..)$ a sequence of i.i.d. $J$-valued random variables with $p_j \coloneqq \mathbb{P}(\xi_i = j)$. We denote the Dirichlet and Neumann eigenvalue counting function of the Krein-Feller-operator w.r.t. $\mu^{(\xi(\omega))}$ by $N_D^{(\xi(\omega))}$ and $N_N^{(\xi(\omega))}$, respectively. Further, if $|J| = \infty$, we need the following five technical assumptions:
\begin{enumerate}[label=\textbf{A.\arabic*}]
\item[] 
\begin{align}
\sup_{j \in J} N_j < \infty, \label{A1} \tag{A1}
\end{align} 
\vspace{-1cm}
\item[] 
\begin{align}
\inf_{j \in J} \min_{i=1,...,N_j} r_i^{(j)} m_i^{(j)} > 0, \label{A2} \tag{A2}
\end{align}
\vspace{-0.85cm}
\item[]
\begin{align}
\sup_{j \in J} \max_{i=1,...,N_j} r_i^{(j)} m_i^{(j)} < 1, \label{A3} \tag{A3}
\end{align}
\vspace{-1cm}
\item[] 
\begin{align} 
\prod_{j \in J, \atop \sum_{i=1}^{N_j} r_i^{(j)} m_i^{(j)} < 1} \sum_{i=1}^{N_j} r_i^{(j)} m_i^{(j)} > 0, \label{A4} \tag{A4}
\end{align}
\vspace{-1cm}
\item[]
\begin{align}
\prod_{j \in J, \atop \sum_{i=1}^{N_j} r_i^{(j)} m_i^{(j)} > 1} \sum_{i=1}^{N_j} r_i^{(j)} m_i^{(j)} < \infty. \label{A5} \tag{A5}
\end{align}
\end{enumerate}
Under these assumptions, we obtain:
\begin{theorem}[\cite{Arz14}, Corollary 3.5.1] \label{hom asym}
Let $\gamma_h > 0$ be the unique solution of
\begin{align*}
\prod_{j \in J} \left( \sum_{i=1}^{N_j} \left(r_i^{(j)} m_i^{(j)}\right)^{\gamma_h} \right)^{p_j} = 1.
\end{align*}
Then, there exist $C_1, C_2 > 0$, $x_0 > 0$ and $c_1(\omega), c_2(\omega) > 0$ such that
\begin{align*}
C_1 \, x^{\gamma_h} \, e^{-c_1(\omega)\sqrt{\log x \log \log x}} \le N_D^{\left(\xi(\omega) \right)}(x) \le  N^{\left(\xi(\omega)\right)}_N(x)  \le C_2 \, x^{\gamma_h} \, e^{ -c_2(\omega)\sqrt{\log x \log \log x}}
\end{align*}
for all $x > x_0$ almost surely.
\end{theorem}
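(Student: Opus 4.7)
The plan rests on Dirichlet-Neumann bracketing coupled with the self-similar decomposition of $\mu^{(\xi)}$ and an almost-sure concentration estimate for a random product over the i.i.d.\ environment. The middle inequality $N_D^{(\xi(\omega))}(x)\le N_N^{(\xi(\omega))}(x)$ is immediate from the min-max characterization of the eigenvalues, so only the two outer bounds require work.

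To obtain these, I would unfold one step of the construction. On the subinterval $I_i = S_i^{(\xi_1)}([a,b])$, the measure $\mu^{(\xi)}$ restricts to $m_i^{(\xi_1)}\,\mu^{(\sigma\xi)}\circ (S_i^{(\xi_1)})^{-1}$, where $\sigma$ denotes the shift on environment sequences. The affine change of variables $S_i^{(\xi_1)}$ identifies the eigenvalue problem on $(I_i,\mu^{(\xi)}|_{I_i})$ with the one on $([a,b],\mu^{(\sigma\xi)})$ with every eigenvalue rescaled by $(r_i^{(\xi_1)} m_i^{(\xi_1)})^{-1}$. Dirichlet-Neumann bracketing across the internal endpoints then yields
\begin{align*}
\sum_{i=1}^{N_{\xi_1}} N_D^{(\sigma\xi)}\bigl(r_i^{(\xi_1)} m_i^{(\xi_1)} x\bigr) \le N_D^{(\xi)}(x) \le N_N^{(\xi)}(x) \le \sum_{i=1}^{N_{\xi_1}} N_N^{(\sigma\xi)}\bigl(r_i^{(\xi_1)} m_i^{(\xi_1)} x\bigr),
\end{align*}
and iterating $k$ times produces the same sandwich with summation over $w \in W_k$ and argument $x\prod_{n=1}^k r_{w_n}^{(\xi_n)} m_{w_n}^{(\xi_n)}$.

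Next, I would truncate each branch at the first level where the rescaled argument enters a fixed intermediate range on which a Weyl-type two-sided bound of the form $c\sqrt{y} \le N_{D/N}(y) \le C\sqrt{y}$ holds, uniformly in the tail environment; such bounds follow from the Weyl asymptotics for the Lebesgue base measure $\mu_0$ together with the uniformity provided by \eqref{A1}-\eqref{A3}. Summing in $w$ then produces a two-sided estimate of the form
\begin{align*}
N_{D/N}^{(\xi)}(x) \;\asymp\; x^{\gamma_h}\,\Phi_k(\xi,\gamma_h), \qquad \Phi_k(\xi,\gamma_h) \coloneqq \prod_{n=1}^{k}\, \sum_{i=1}^{N_{\xi_n}} \bigl(r_i^{(\xi_n)} m_i^{(\xi_n)}\bigr)^{\gamma_h},
\end{align*}
with multiplicative errors controlled by the truncation remainder. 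The i.i.d.\ random variables $X_n \coloneqq \log \sum_{i=1}^{N_{\xi_n}}(r_i^{(\xi_n)} m_i^{(\xi_n)})^{\gamma_h}$ have $\mathbb{E}X_n = 0$ by the very equation defining $\gamma_h$ and are uniformly bounded under \eqref{A1}-\eqref{A5}. A Hoeffding bound together with a Borel-Cantelli argument therefore yields $|\sum_{n=1}^k X_n|\le c(\omega)\sqrt{k\log k}$ a.s.\ for large $k$, and the choice $k = k(x)\asymp \log x$ converts this into the factor $e^{\pm c_i(\omega)\sqrt{\log x \log\log x}}$ stated in the theorem.

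The main obstacle is the truncation bookkeeping. The terminal Weyl estimate produces an additive $O(1)$ remainder per branch, and there are $|W_k|$ branches, so one has to show that this accumulated remainder stays strictly of smaller order than the main $x^{\gamma_h}\Phi_k$ contribution across the random environment. Here \eqref{A2}-\eqref{A3} ensure that the random product of $r_{w_n}^{(\xi_n)} m_{w_n}^{(\xi_n)}$ shrinks at a controlled geometric rate so that the stopping level is of order $\log x$ deterministically, while \eqref{A4}-\eqref{A5} prevent the renewal factors $\sum_i (r_i^{(j)} m_i^{(j)})^{\gamma_h}$ from degenerating on either side. Together they guarantee that the remainder is absorbed into the sub-polynomial window $e^{\pm c_i(\omega)\sqrt{\log x\log\log x}}$ and that the constants $C_1, C_2$ depend only on $[a,b]$ and the common distribution of~$\xi_1$.
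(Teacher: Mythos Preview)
This statement carries no proof in the paper: it is recalled verbatim as Corollary~3.5.1 of \cite{Arz14} and serves only as background for the comparison in Section~\ref{compare}. There is thus no argument in the present text against which to set your sketch; any detailed comparison would have to be with \cite{Arz14} itself.

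On the substance of your outline, the skeleton is the right one and matches the route taken in \cite{Arz14}: iterate the Dirichlet--Neumann bracketing to depth $k\asymp\log x$, recognise $\log\Phi_k(\xi,\gamma_h)=\sum_{n\le k}X_n$ as a centred, bounded-increment random walk via the defining equation for $\gamma_h$, and turn an almost-sure $O(\sqrt{k\log k})$ fluctuation bound (Hoeffding plus Borel--Cantelli, or directly the law of the iterated logarithm) into the stated $e^{\pm c_i(\omega)\sqrt{\log x\,\log\log x}}$ window.

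There is one concrete error in the truncation step. At the terminal level the relevant measure is $\mu^{(\sigma^k\xi)}$, which is still a singular Cantor-type measure, not the Lebesgue base $\mu_0$; no two-sided Weyl bound $c\sqrt y\le N_{D/N}(y)\le C\sqrt y$ is available for it, and the lower half is in fact false since for such measures the counting function grows like $y^{\gamma}$ with $\gamma<\tfrac12$. What is actually needed is far more modest: a uniform (over shifted environments) upper bound on the first Dirichlet eigenvalue, so that $N_D^{(\sigma^k\xi)}(y)\ge 1$ once $y$ exceeds a fixed threshold, together with $N_N\le N_D+2$ and a crude universal upper estimate of the type $N_D(y)\le Cy$. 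Your passage from the iterated sum over $W_k$ to the closed form $x^{\gamma_h}\Phi_k(\xi,\gamma_h)$ also hides a non-trivial point, since with a single stopping depth $k$ the rescaled arguments $y_w$ spread over a window of width $(\rho_+/\rho_-)^k$ rather than a bounded one; this is exactly the ``truncation bookkeeping'' you flag at the end, and it is where the uniformity furnished by \eqref{A1}--\eqref{A3} does real work.
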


\section{C-M-J Branching Processes} \label{branching} 
By the construction of random recursive Cantor sets, there is a natural relation to random labelled trees. We will be able to write the eigenvalue counting function as a sum over each node of the tree, counted by some random characteristic which leads to C-M-J branching processes. This method was also used in \cite{Ham00}. Nerman \cite{Ner81} used renewal theory, based on \cite{Fel66}, for some convergence results for C-M-J branching processes. These results can then be used to determine the asymptotic behaviour of the eigenvalue counting functions. 
\\\\
A C-M-J branching process is a stochastic process which counts individuals of a population according to some (maybe random) function $\phi$. We assume that the considered population has a unique ancestor, denoted by $\emptyset$. We say $\ii = (i_1,...,i_n)$ belongs to the $n$-th generation of the population, if the individual $\ii$ is the $i_n$-th child of the $i_{n-1}$-th child of the $...$ of the $i_1$-th child of the ancestor $\emptyset$. Since a mother can give birth to a child, we say $\boldsymbol{\tilde \iota}$ is the mother of $\ii$, if $ \boldsymbol{\tilde \iota} = (i_1,...,i_{n-1})$. 
The generation of $\ii$ is given by $|\ii|$.
Each individual has a reproduction rate, described by a random point process $\xi_{\ii}$ on $[0,\infty)$, i.e. an individual reproduces at time $t$ according to $\xi_{\ii} (t)$, for $t \in [0,\infty)$, whereby $\xi_{\ii} (t)$ denotes the $\xi_{\ii}$ measure of $[0,t]$. The birth time of $\ii$ is denoted by $\sigma_{\ii}$ and is given as 
\begin{align*}
\sigma_\emptyset &= 0, \\
\sigma_{\ii} &= \sigma_{\boldsymbol{\tilde \iota}} + \inf \left\{u \ge 0: \xi_{\boldsymbol{\tilde \iota}} (u) \ge i_n \right\}.
\end{align*} 
Every individual has a life time $L$. Therefore, it lives in the interval $[\sigma_{\ii},L + \sigma_{\ii})$ and dies at time $L + \sigma_{\ii}$. We define the tuple $(\xi, L, \phi)$ on some probability space $(\tilde{\Omega},\tilde{\mathcal{B}}, \tilde{\mathbb{P}})$. We call $(\xi_x,L_x,\phi_x)_x$ a \textit{general branching process}. Let
\begin{align*}
\mathcal{G}_n &\coloneqq \left\{(i_1,...,i_n): i_j \in \mathbb{N},~ j=1,...,n \right\}, \\[5pt]
\mathcal{G} &\coloneqq \{0\} \cup \left( \bigcup_{n=1}^\infty \mathcal{G}_n \right).
\end{align*}
The probability space on which we define the C-M-J branching processes is the product space
\begin{align}
(\Omega, \mathcal{B}, \mathbb{P}) = \prod_{\ii \in \mathcal{G}} (\Omega_{\ii}, \mathcal{B}_{\ii}, \mathbb{P}_{\ii}), \label{prob space}
\end{align}
where $(\Omega_{\ii}, \mathcal{B}_{\ii}, \mathbb{P}_{\ii})$ are copies of $(\tilde{\Omega},\tilde{\mathcal{B}}, \tilde{\mathbb{P}})$ and contain independent copies $(\xi_{\ii}, L_{\ii}, \phi_{\ii})$ of $(\xi, L, \phi)$. Thereby, we assume that $\phi: \Omega \times \mathbb{R}  \longrightarrow [0,\infty)$ is a product measurable, separable càdlàg function on $\mathbb{R}$.
The \textit{C-M-J branching process} to a given general branching process $(\xi_x,L_x,\phi_x)_x$ is defined by
\begin{align*}
Z^{\phi}_t \coloneqq \sum_{\ii \in \Sigma} \phi_{\ii}(t-\sigma_{\ii}),
\end{align*}
where $\Sigma$ is the trace of the underlying Galton-Watson process and $\phi_{\ii}(t) = 0$ for $t<0$. The interpretation of the process $Z^\phi$ depends on the random characteristic $\phi$. For $\phi \equiv 1$, $Z^\phi$ describes the total number of individuals born up to and including time $t$. In this case, we set $T_t \coloneqq Z^\phi_t$. Further, we define $\nu(t) \coloneqq \nu ([0,t]) \coloneqq \mathbb{E}(\xi(t))$ and we require that the following two properties hold:
\begin{enumerate}
\item
There exists an $\alpha >0 $ such that
\begin{align*}
\int_0^\infty e^{- \alpha t} \, d \nu (t) = 1.
\end{align*}
This parameter $\alpha$ is called \textit{Malthusian parameter} of the process.
\item
For the Malthusian parameter $\alpha$ holds
\begin{align*}
\int_0 ^\infty u \, e^{-\alpha u} \, d \nu (u) < \infty.
\end{align*}
\end{enumerate}
The following representation of $Z^\phi$ is useful for our consideration (see \cite{Jag75}):
\begin{align}
Z^\phi_t = \phi_\emptyset(t) + \sum_{i=1}^{\xi_\emptyset(t)} \leftidx{_{(i)}}Z_{t-\sigma_i}^\phi, ~~~ t \in [0, \infty),  \label{process sum presentation}
\end{align}
where $\left(\leftidx{_{(i)}} Z_t^\phi \right)_t $, $i=1,...,\xi_\emptyset(\infty)$ are i.i.d., distributed like $\left(Z_t^\phi \right)_t$. Also, $\left(\leftidx{_{(i)}} Z_t^\phi\right)_t $ is independent of $\xi_\emptyset$. If there will be no confusion, we will suppress the $\ii$ in $\phi_{\ii}$, $L_{\ii}$, etc. Further, we write $\xi ( \infty )$, if we mean $\xi ([0, \infty))$ and analogously for the other measures. The type of branching processes we consider is called supercritical, i.e. $\nu(\infty) > 1$. In this case the extinction probability is strictly less than 1 (see e.g. \cite[Theorem 2.3.1]{Jag75}). In our consideration each individual will have at least two offsprings and therefore the extinction probability is 0. By $\xi_\alpha$ we denote the Laplace-Stieltjes transformation with respect to $\alpha$ of $\xi$ and by $\nu_\alpha$ its expectation, i.e.
\begin{align*}
\xi_\alpha(t) &= \int_0^t e^{- \alpha s} \, d \xi (s), \\[3pt]
\nu_\alpha(t) &= \mathbb{E}(\xi_\alpha(t)).
\end{align*}
In the following we order the individuals according to their birth times, that is, if $\ii$ is the $n$-th individual of the population and
\begin{align*}
\sigma_{\ii} < \sigma_{(\ii,i)}, 
\end{align*}
for some $i \in \mathbb{N}$ and there exists no individual $\jj$ such that
\begin{align*}
\sigma_{\ii} < \sigma_{\jj} < \sigma_{(\ii,i)},
\end{align*}
then $(\ii,i)$ is the $(n+1)$-th individual. If we have several births at the same time, we sort them according to an arbitrary rule. We write $\ii_{(n)}$ for the $n$-th individual of the population.

For our main result, we need to introduce a random variable $W$ which is the almost sure limit of a martingale $(R_n)_{n \in \mathbb{N}}$. Therefore, we define a filtration $(\mathcal{A}_n)_{n \in \mathbb{N}}$ on the probability space $(\Omega,\mathcal{B},\mathbb{P})$ as follows:
For $\jj \in \mathcal{G}$ let $P_{\jj}$ be the projection of $(\Omega,\mathcal{B})$ onto $\left(\Omega_{\jj},\mathcal{B}_{\jj}\right)$. Then, $\mathcal{A}_n$ is defined as the smallest $\sigma$-algebra (on $\Omega$) such that
\begin{align*}
\left\{\omega \in \Omega : ~ \ii_{(1)}(\omega) = \jj_1 ,...,\ii_{(n)}(\omega) = \jj_n \right\} \in \mathcal{A}_n ~~~ \text{ for all } \jj_1,...,\jj_n \in \mathcal{G}
\end{align*}
and
\begin{align*}
A \cap \left\{\omega \in \Omega: ~ \jj \in \left\{\ii_{(1)}(\omega) ,..., \ii_{(n)}(\omega) \right\}  \right\} \in \mathcal{A}_n ~~~ \text{ for all } A \in P_{\jj}^{-1}(\mathcal{B}), ~ \text{ for all } \jj \in \mathcal{G}.
\end{align*}
We interpret $\mathcal{A}_n$ as the biography of the first $n$ individuals.
By construction $\sigma_{\ii_{(n)}}$ is $\mathcal{A}_{n-1}$ measurable. Further, we have that 
$\leftidx{_{\left(\ii_{(k)}\right)}} Z_t^\phi$ and $\xi_{\ii_{(k)}}$ are independent of $\mathcal{A}_n$ for all $k > n$, $t \in \mathbb{R}$. We remark that analogous results hold for $\mathcal{A}_{T_t}$ (for individuals born after time $t$ such that their parents are born before or at time $t$), where $T_t$ is a stopping time with respect to the constructed filtration for fixed $t$. 
Let $\mathcal{H}(n)$ be the set of the first $n$ individuals of the population and
\begin{align*}
R_0 &\coloneqq 1 \\
R_n &\coloneqq 1 + \sum_{\ii \in \mathcal{H}(n)} \sum_{i=1}^{\xi_{\ii}(\infty)} e^{- \alpha \sigma_{(\ii, i)}} - \sum_{\ii \in \mathcal{H}(n)} e^{- \alpha \sigma_{\ii}}, ~~~ n \in \mathbb{N}.
\end{align*}

\begin{theorem} \label{W bigger zero}
The process $(R_n)_{n \in \mathbb{N}}$ is a non-negative martingale with respect to \\ $(\mathcal{A}_n)_{n \in \mathbb{N}}$. Furthermore, there exists a random variable $W$ such that
\begin{align*}
R_n \stackrel{n \rightarrow \infty}{\longrightarrow} W ~~~ a.s.
\end{align*}
If 
\begin{align*}
\mathbb{E}\left(\xi_\alpha(\infty) \log^+ \xi_\alpha(\infty)\right) < \infty,
\end{align*}
then $W>0$ a.s., otherwise $W=0$ a.s.
\end{theorem}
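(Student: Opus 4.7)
First I would recast $R_n$ as a sum over the ``frontier'' of the first $n$ individuals. Setting $\partial\mathcal{H}(n) \coloneqq \{(\ii,i):\ii\in\mathcal{H}(n),\,1\le i\le\xi_{\ii}(\infty)\}\setminus\mathcal{H}(n)$, a term-by-term cancellation along the genealogical tree gives
\[
R_n \;=\; \sum_{\boldsymbol{\kappa}\in\partial\mathcal{H}(n)} e^{-\alpha\sigma_{\boldsymbol{\kappa}}},
\]
so in particular $R_n\ge 0$. Passing from $\mathcal{H}(n)$ to $\mathcal{H}(n+1)$ removes $\ii_{(n+1)}$ from the frontier and inserts its offspring, producing the one-step increment
\[
R_{n+1}-R_n \;=\; e^{-\alpha\sigma_{\ii_{(n+1)}}}\bigl(\xi_{\ii_{(n+1)},\alpha}(\infty)-1\bigr).
\]
As noted in the text preceding the theorem, $\sigma_{\ii_{(n+1)}}$ is $\mathcal{A}_n$-measurable and the life history of $\ii_{(n+1)}$ is independent of $\mathcal{A}_n$ and distributed as $(\xi,L,\phi)$. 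The Malthusian condition $\int_0^\infty e^{-\alpha s}\,d\nu(s)=1$ yields $\mathbb{E}(\xi_\alpha(\infty))=1$, so that $\mathbb{E}(R_{n+1}-R_n\mid\mathcal{A}_n)=0$. Thus $(R_n)$ is a non-negative $(\mathcal{A}_n)$-martingale with $\mathbb{E}(R_n)\equiv 1$, and Doob's convergence theorem delivers an integrable random variable $W\ge 0$ with $R_n\to W$ almost surely.

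For the dichotomy I would invoke the general branching-process analogue of the Kesten-Stigum theorem due to Nerman \cite{Ner81}: the martingale $(R_n)$ is uniformly integrable if and only if $\mathbb{E}\bigl(\xi_\alpha(\infty)\log^+\xi_\alpha(\infty)\bigr)<\infty$. In the uniformly integrable case $\mathbb{E}(W)=1$, so $\mathbb{P}(W>0)>0$; using the recursive decomposition
\[
W \;=\; \sum_{i=1}^{\xi_\emptyset(\infty)} e^{-\alpha\sigma_i}\,W_{(i)},
\]
with $W_{(i)}$ i.i.d.\ copies of $W$ independent of $\xi_\emptyset$, a standard recursion argument shows that $\mathbb{P}(W=0)$ equals the extinction probability of the underlying Galton-Watson tree. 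Since each individual produces at least two offspring, the extinction probability vanishes, hence $W>0$ a.s. When the $x\log x$ condition fails, the same recursion combined with the classical truncation/Borel-Cantelli argument of \cite{Ner81} forces $W=0$ a.s.

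The main obstacle is precisely the ``only if'' half of this dichotomy, which requires producing enough independent large excursions of the increments $\xi_{\ii,\alpha}(\infty)-1$ to drive the limit to $0$; rather than reproduce Nerman's renewal-theoretic proof I would invoke \cite{Ner81} at that step.
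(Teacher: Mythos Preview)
Your argument is correct and in fact supplies considerably more detail than the paper does: the paper's entire proof is the single citation \cite[Theorem 4.1]{Asm84}. Your frontier representation of $R_n$, the one-step increment computation, and the martingale verification via the Malthusian identity $\mathbb{E}\xi_\alpha(\infty)=1$ are exactly the standard steps that lie behind that reference, and your treatment of the $x\log x$ dichotomy (uniform integrability $\Rightarrow \mathbb{E}W=1$, then the fixed-point argument identifying $\mathbb{P}(W=0)$ with the extinction probability, which vanishes here since $\xi_\emptyset(\infty)\ge 2$) is the classical route.

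The only discrepancy worth noting is bibliographic rather than mathematical: you attribute the Kesten--Stigum analogue to Nerman \cite{Ner81}, whereas the paper points to Asmussen--Hering \cite{Asm84}. Both sources contain the result; Nerman's paper establishes the limit theory for $Z^\phi$ under Conditions~\ref{CB1}--\ref{CB2}, while the precise $x\log x$ criterion for $W>0$ versus $W=0$ in the form stated here is most cleanly found in \cite{Asm84}. If you want to align with the paper, simply replace your appeal to \cite{Ner81} in the dichotomy step by \cite[Theorem 4.1]{Asm84}.
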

\begin{proof}
\cite[Theorem 4.1]{Asm84}
\end{proof}

The case where $\phi_{\ii}$ depends on the whole line of descendants is discussed in \cite[Chapter 7]{Ner81}. There, it is shown that Theorem \ref{W bigger zero} also holds.

We need a strong law of large numbers for C-M-J branching processes. For reference see \cite{Cha14}. For this strong law, the branching process has to satisfy the following two conditions.

\begin{condition} \label{CB1}
There exists a non-increasing bounded positive integrable càdlàg function $g$ on $[0,\infty)$ such that
\begin{align*}
\mathbb{E} \left( \sup_{t \ge 0} \frac{\xi_\alpha(\infty)- \xi_\alpha(t)}{g(t)} \right) < \infty.
\end{align*}
\end{condition}

\begin{condition} \label{CB2}
There exists a non-increasing bounded positive integrable càdlàg function $h$ on $[0,\infty)$ such that
\begin{align*}
\mathbb{E}\left( \sup_{t \ge 0} \frac{e^{- \alpha t}\phi(t)}{h(t)} \right)< \infty .
\end{align*}
\end{condition}

\begin{theorem}[strong law of large numbers] \label{strong law}
Let $(\xi_x,L_x,\phi_x)_x$ be a general branching process with Malthusian parameter $\alpha$, where $\phi \ge 0 $ and $\phi(t) = 0$ for $t < 0$. Then,
\begin{enumerate}
\item If $\nu_\alpha$ is non-lattice,
\begin{align*}
e^{-\alpha t} Z^{\phi}(t) \rightarrow \nu_\alpha (\infty) \, W ~~~ a.s.
\end{align*}
\item
If $\nu_\alpha$ is lattice with span $T$, there exists a periodic function $G$ with period $T$ such that
\begin{align*}
Z^\phi(t) = (G(t) + o(1))\, e^{\alpha t} \, W ~~~ a.s.
\end{align*}
$G$ is given as
\begin{align*}
G(t) = T \cdot \frac{\sum_{j=-\infty}^{\infty} e^{-\alpha (t+jT)} \phi (t+jT)}{\int_0^\infty t e^{-\alpha t} \,d \nu (t)}. 
\end{align*}
\end{enumerate}
\end{theorem}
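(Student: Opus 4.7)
The plan is to reduce the asymptotic statement to two classical ingredients: a renewal equation for the first moment $m^\phi(t) := \mathbb{E}(e^{-\alpha t} Z^\phi_t)$, and the Malthusian martingale $(R_n)_n$ of Theorem \ref{W bigger zero}. Starting from the branching decomposition \eqref{process sum presentation}, multiplying by $e^{-\alpha t}$ and taking expectations, the independence of the daughter processes from $\xi_\emptyset$ together with Fubini yields
\begin{align*}
m^\phi(t) = e^{-\alpha t}\,\mathbb{E}(\phi(t)) + \int_0^t m^\phi(t-s)\, e^{-\alpha s}\, d\nu(s).
\end{align*}
By the defining property of the Malthusian parameter, $d\tilde\nu(s) := e^{-\alpha s}d\nu(s)$ is a probability measure on $[0,\infty)$ with mean $\beta := \int_0^\infty s\, e^{-\alpha s}\, d\nu(s)$, finite by the second standing assumption on $\alpha$.

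Next I would invoke Feller's key renewal theorem. Condition \ref{CB2} ensures that $t \mapsto e^{-\alpha t}\,\mathbb{E}(\phi(t))$ is directly Riemann integrable, so in the non-lattice case $m^\phi(t)$ converges to $\beta^{-1}\int_0^\infty e^{-\alpha s}\mathbb{E}(\phi(s))\, ds$, which identifies the mean limit. In the lattice case with span $T$, the lattice version of the key renewal theorem produces a periodic function $G$ of period $T$ of the form stated in the theorem, with $m^\phi(t) = G(t) + o(1)$ as $t \to \infty$.

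To upgrade this expectation convergence to an almost sure statement, I would use Nerman's stopping-line technique. For each $u > 0$ let $\partial_u$ denote the stopping line of individuals $\ii$ with $\sigma_\ii > u$ whose mothers are born at or before time $u$, and split $Z^\phi_t = A_u(t) + B_u(t)$, where $A_u(t)$ collects the $\phi$-values at time $t$ of individuals born before $u$ and $B_u(t) = \sum_{\ii \in \partial_u} \leftidx{_{(\ii)}}Z^\phi_{t-\sigma_\ii}$ sums the descendant subprocesses rooted at $\partial_u$. Conditional on the biography up to $\partial_u$, the subprocesses rooted at distinct members of $\partial_u$ are i.i.d.\ copies of $Z^\phi$. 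Multiplying by $e^{-\alpha t}$ and letting $t \to \infty$ with $u$ fixed, each factor $e^{-\alpha(t-\sigma_\ii)} \leftidx{_{(\ii)}}Z^\phi_{t-\sigma_\ii}$ tends in mean to the constant (respectively to $G(t-\sigma_\ii)$ in the lattice case) obtained above, while the normalizing sum $\sum_{\ii \in \partial_u} e^{-\alpha \sigma_\ii}$ converges almost surely to $W$ by an optional sampling extension of the martingale limit in Theorem \ref{W bigger zero}.

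The main obstacle will be turning this double limit into a rigorous almost sure statement, which is precisely where Conditions \ref{CB1} and \ref{CB2} enter. Condition \ref{CB1} furnishes a uniform integrable envelope $g$ for the reproduction tails of $\xi_\alpha$, ensuring that the residual mass beyond $\partial_u$ decays uniformly as $u \to \infty$; Condition \ref{CB2} supplies an integrable deterministic bound $h$ on $e^{-\alpha t}\phi(t)$ that controls each individual contribution to $A_u(t)$ uniformly in $t$. Combining these two dominations with the martingale convergence $R_n \to W$ and a standard truncation plus dominated-convergence step yields the claimed almost sure limits in both the non-lattice and the lattice case. The full technical execution of this program is carried out in \cite{Ner81} and in the extension \cite{Cha14}.
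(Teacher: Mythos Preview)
The paper does not give its own proof of this theorem at all: it is stated as a quoted result, with the sentence ``For reference see \cite{Cha14}'' immediately preceding it, and no proof environment follows. So there is nothing in the paper to compare your argument against beyond the bare citation.

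Your sketch is a faithful outline of the Nerman approach that \cite{Ner81} and \cite{Cha14} actually carry out: derive the renewal equation for $m^\phi(t)=\mathbb{E}(e^{-\alpha t}Z^\phi_t)$ from \eqref{process sum presentation}, apply the key renewal theorem (non-lattice and lattice versions) to identify the deterministic limit, and then use the stopping-line decomposition together with the martingale limit $W$ of Theorem~\ref{W bigger zero} to upgrade convergence in mean to almost sure convergence, with Conditions~\ref{CB1} and~\ref{CB2} supplying the domination needed for the double-limit step. Since you close by deferring the technical execution to \cite{Ner81} and \cite{Cha14}, your proposal and the paper ultimately rest on the same external references; you simply go further than the paper by sketching what those references contain.
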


\section[5pt]{Spectral Asymptotics for General Recursive Cantor Measures} \label{section asympotic}
 
\subsection{Construction of General Recursive Cantor Measures.} \label{constr frac}

Let $J$ be a (possibly uncountable) index set. We define to each $j \in J$ an IFS $\s^{(j)}$. Therefore, let $N_j \in \mathbb{N}$, $ N_j \ge 2$. Then $\s^{(j)} = \left(S_1^{(j)},...,S_{N_j}^{(j)}\right)$, where we define $S_i^{(j)} : [a,b] \longrightarrow [a,b]$ by
\begin{align*}
S_i^{(j)}(x) \coloneqq r_i^{(j)} \, x + c_i^{(j)},
\end{align*}
for some $r_i^{(j)} \in (0,1)$, $c_i^{(j)} \in \mathbb{R}$, $i=1,...,N_j$ such that
\begin{align*}
a = S_1^{(j)}(a) < S_1^{(j)}(b) \le S_2^{(j)}(a) < S_2^{(j)}(b) \le \cdots \le S_{N_j}^{(j)}(a) <  S_{N_j}^{(j)}(b)=b.
\end{align*}
Furthermore, let $m^{(j)} = \left(m^{(j)}_1,...,m^{(j)}_{N_j}\right)$ be a vector of weights and thus, as in Chapter \ref{self sim and hom case}, an element of the index set $J$ identifies a tuple $\left(S^{(j)},m^{(j)}\right)$.\\
As in Chapter \ref{branching}, we construct a population I with unique ancestor, denoted by $\emptyset$. Every individual $\ii \in I$ identifies an element of $J$ which we also denote by $\ii$. The number of children of $\ii$ is $N_{\ii}$. For $\ii, \jj \in \mathcal{G}$, $\ii = (i_1,...,i_n)$, $\jj = (j_1,...,j_m)$ we define $\ii\jj \coloneqq (i_1,...,i_n,j_1,...,j_m)$ and, if $m>n$, $\jj|_n \coloneqq (j_1,...,j_n) $.
Let $I_n$ be the $n$-th generation of $I$.

For $\ii \in I_{n}$, $\ii = (i_1,...,i_n)$, we define
\begin{align*}
m_{\ii} &\coloneqq m_{i_1}^{(\emptyset)} \cdots m_{i_n}^{((i_1,...,i_{n-1}))}, \\
S_{\ii} &\coloneqq S_{i_1}^{(\emptyset)} \circ ... \circ S_{i_{n}}^{((i_1,...,i_{n-1}))}
\end{align*}
and we define analogously $S_{\ii}^{-1}$ as the composition of the preimages of the $S_i$.

For $n \in \mathbb{N}$ let
\begin{align*}
K_{n}^{(I)} \coloneqq \bigcup_{\ii \in I_n} S_{\ii}([a,b]).
\end{align*}
The limiting set $K^{(I)} \coloneqq \bigcap_{n=1}^\infty K_n^{(I)}$ is called \textit{recursive Cantor set}.

\begin{proposition}
The set $K^{(I)}$ is compact and contains at least countably infinitely many elements, namely $S_{(i_1,...,i_n)}(a)$ and $S_{(i_1,...,i_n)} (b)$, $i_1= 1,...,N_{\emptyset},...,i_{n} = 1,...,N_{(i_1,...,i_{n-1})}$, $n \in \mathbb{N}$.
\end{proposition}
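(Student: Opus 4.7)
The strategy splits naturally into three small tasks: (i) show that the approximating sets $K_n^{(I)}$ are compact and nested, (ii) show that the claimed endpoints actually lie in $K^{(I)}$, and (iii) observe that infinitely many of these endpoints are distinct.

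For (i), the plan is to note that each $S_{\ii}$ is an affine contraction, hence $S_{\ii}([a,b])$ is a closed interval. Since $I_n$ is a finite set for every $n$, $K_n^{(I)}$ is a finite union of closed intervals contained in $[a,b]$, and therefore compact. Nestedness $K_{n+1}^{(I)} \subseteq K_n^{(I)}$ follows from
\begin{align*}
S_{\ii}([a,b]) \supseteq \bigcup_{i=1}^{N_{\ii}} S_{\ii}\!\left(S_i^{(\ii)}([a,b])\right) = \bigcup_{i=1}^{N_{\ii}} S_{\ii\,(i)}([a,b]),
\end{align*}
using $S_i^{(\ii)}([a,b]) \subseteq [a,b]$, which holds by the assumption $a = S_1^{(\ii)}(a)$ and $S_{N_{\ii}}^{(\ii)}(b) = b$ together with the ordering condition on the $S_i^{(\ii)}$. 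Thus $K^{(I)}$ is the intersection of a decreasing chain of non-empty compact sets, hence compact and non-empty.

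For (ii), the key observation is that the standing assumption $S_1^{(\jj)}(a) = a$ and $S_{N_{\jj}}^{(\jj)}(b) = b$ for every label $\jj$ lets one elongate any address without moving the endpoint. Concretely, given $\ii = (i_1,\dots,i_n) \in I_n$, I would define for $m \ge n$ the extended address $\ii^{(m)} := (i_1,\dots,i_n,1,1,\dots,1) \in I_m$ by appending $m-n$ ones. Since each appended factor $S_1^{(\cdot)}$ fixes $a$, one gets $S_{\ii}(a) = S_{\ii^{(m)}}(a) \in K_m^{(I)}$; symmetrically, appending $N_{\cdot}$ at every step yields $S_{\ii}(b) \in K_m^{(I)}$. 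As $m$ was arbitrary, both endpoints lie in $\bigcap_m K_m^{(I)} = K^{(I)}$.

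For (iii), I would exhibit a concrete countably infinite family of pairwise distinct points inside $K^{(I)}$: consider $x_n := S_{(1,1,\dots,1)}(b)$ with $n$ ones. Using $S_1^{(\jj)}(x) = r_1^{(\jj)} x + c_1^{(\jj)}$ and $S_{(1,\dots,1)}(a) = a$, one computes $x_n = a + \bigl(\prod_{k=0}^{n-1} r_1^{(\mathbf{1}_k)}\bigr)(b-a)$, where $\mathbf{1}_k = (1,\dots,1)$ has length $k$. Since every $r_1^{(\jj)} \in (0,1)$, the factor in front of $(b-a)$ is strictly decreasing in $n$, so the $x_n$ are pairwise distinct; together with (ii) this produces infinitely many distinct points of $K^{(I)}$ among the listed endpoints. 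No step is a real obstacle; the only thing to be careful about is that at generation $n$ some endpoints may coincide because of the weak inequality $S_i^{(\jj)}(b) \le S_{i+1}^{(\jj)}(a)$, which is why one gives an explicit infinite subfamily rather than trying to count all $2|I_n|$ endpoints as distinct.
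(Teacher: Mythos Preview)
Your proof is correct and the core argument in (ii)---extending the address by appending $1$'s to fix $a$ and by appending $N_{\cdot}$'s to fix $b$---is exactly the paper's approach. Your write-up is in fact more complete than the paper's: the paper's proof only verifies that the listed endpoints belong to every $K_{n+m}^{(I)}$ and says nothing explicit about compactness (your step (i)) or about there being infinitely many \emph{distinct} such endpoints (your step (iii)); both of these are routine but worth spelling out, and your explicit strictly decreasing family $x_n = S_{(1,\dots,1)}(b)$ neatly handles the possibility that adjacent level-$n$ intervals may touch.
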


\begin{proof}
Let $\ii = (i_1,...,i_n) \in I_{n}$. For $m \in \mathbb{N}$ let $\ii'$ and $\ii''$ be two individuals of the population such that $\ii' = \ii  \textbf{1}_m$, $\textbf{1}_m \coloneqq (1,...,1) \in \mathbb{R}^m $, $m \in \mathbb{N}$ and $i_1'',...,i_{n}'' = i_1,...,i_{n}$, $i''_k = N_{\left(i_1,...,i_{k-1},N_{(i_1,...,i_{k-1})}\right)}$ for $k = n+1,..,n+m$.  By definition, we have
\begin{align*}
S_{\ii'}(a) &= S_{\ii}(a), \\
S_{\ii''}(b) &= S_{\ii}(b).
\end{align*}
Thus, we have $S_{\ii}(a)$, $S_{\ii}(b) \in K_{n+m}^{(I)}$ for all $m \in \mathbb{N}$, which proofs the statement.
\end{proof}

By construction, we have
\begin{align}
K^{(I)} = \bigcup_{i=1}^{N_{\emptyset}} S_i^{(\emptyset)}\left(K^{(\theta_i I)}\right), \label{similarity}
\end{align}
where $\theta_i I$ denotes the subtree of $I$, rooted at $(i)$.

We define the recursive Cantor measures, analogously to the homogeneous Cantor measures. Let 

\begin{align*}
\mu_n^{(I)}(A) \coloneqq \sum_{\ii \in I_n} m_{\ii} \, \mu_0\left(S_{\ii}^{-1}(A)\right), ~~~ \mu_0(A) \coloneqq \frac{1}{b-a} \lambda^1_{|_{[a,b]}}(A)
\end{align*}

for all $A\in \mathfrak{B}([a,b])$.
The recursive cantor measure $\mu^{(I)}$ to given Cantor set coded by $I$ is defined as the weak limit of $\left( \mu^{(I)}_n \right)_{n \in \mathbb{N}}$. 
 
\begin{lemma}
For all $\ii \in I$ holds
\begin{align*}
\mu^{(I)} (S_{\ii}([a,b])) = m_{\ii}.
\end{align*}
\end{lemma}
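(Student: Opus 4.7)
The plan is to establish the identity first at the level of the approximating probability measures $\mu_n^{(I)}$ and then to transfer it to their weak limit $\mu^{(I)}$ via the Portmanteau theorem.

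\emph{Step 1 (exact identity at level $n$).} Fix $\ii \in I$ of length $k$ and let $n \geq k$. If $\jj \in I_n$ satisfies $\jj|_k = \ii$, then $S_\jj([a,b]) \subseteq S_\ii([a,b])$ and hence $\mu_0(S_\jj^{-1}(S_\ii([a,b]))) = 1$. Otherwise the non-overlap assumption on each IFS forces $S_\jj([a,b]) \cap S_\ii([a,b])$ to contain at most one point, which is $\mu_0 \circ S_\jj^{-1}$-null. Consequently,
\begin{align*}
\mu_n^{(I)}(S_\ii([a,b])) = \sum_{\substack{\jj \in I_n \\ \jj|_k = \ii}} m_\jj = m_\ii \sum_{\jj' \in (\theta_\ii I)_{n-k}} m_{\jj'}^{(\theta_\ii I)},
\end{align*}
and a short induction on $n-k$ based on $\sum_{i=1}^{N_j} m_i^{(j)} = 1$ shows the inner sum equals $1$. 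Thus $\mu_n^{(I)}(S_\ii([a,b])) = m_\ii$ for every $n \geq k$.

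\emph{Step 2 (passage to the weak limit).} Writing $S_\ii([a,b]) = [S_\ii(a), S_\ii(b)]$ with interior $(S_\ii(a), S_\ii(b))$, and using that $\mu_n^{(I)}\bigl(\{S_\ii(a), S_\ii(b)\}\bigr) = 0$ because each $\mu_n^{(I)}$ is absolutely continuous with respect to Lebesgue, the Portmanteau inequalities applied to the closed interval and to its open interior yield the sandwich
\begin{align*}
\mu^{(I)}\bigl((S_\ii(a), S_\ii(b))\bigr) \;\leq\; m_\ii \;\leq\; \mu^{(I)}\bigl([S_\ii(a), S_\ii(b)]\bigr).
\end{align*}

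\emph{The main obstacle.} Collapsing this sandwich requires showing that the two boundary points $S_\ii(a)$ and $S_\ii(b)$ carry no $\mu^{(I)}$-mass, i.e.\ that $S_\ii([a,b])$ is a $\mu^{(I)}$-continuity set. I would address this by applying the sandwich to the chain of descendants $\ii \mathbf{1}_m$ and its rightmost counterpart $\ii (N_\ii, N_{\ii N_\ii},\ldots)$, both of which are nested and shrink to the respective endpoint as $m \to \infty$; this yields arbitrarily small upper bounds of the form $m_{\ii \mathbf{1}_m}$ on the $\mu^{(I)}$-mass in neighborhoods of $S_\ii(a)$ (and analogously on the right). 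Combined with the global identity $\sum_{\jj \in I_n} m_\jj = 1 = \mu^{(I)}([a,b])$, which together with the lower bound of Step 2 for every $\jj \in I_n$ leaves no slack for systematic extra atomic mass at shared interior endpoints, the two boundary atoms vanish and the sandwich collapses to $\mu^{(I)}(S_\ii([a,b])) = m_\ii$.
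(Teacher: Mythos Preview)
Your Step~1 is exactly the paper's argument: it, too, computes $\mu_{n+m}^{(I)}(S_{\ii}([a,b])) = m_{\ii}$ by splitting the sum over $\jj \in I_{n+m}$ according to whether $\jj|_n = \ii$, and then stops --- the paper's proof literally ends at that display, with no explicit justification of the passage to the weak limit. So on the core computation you and the paper coincide, and everything from Step~2 onward is additional care that the paper does not supply.

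That said, your resolution of the ``main obstacle'' does not quite close. Applying the sandwich to the descendant $\ii\mathbf{1}_m$ yields $\mu^{(I)}\bigl((S_{\ii\mathbf{1}_m}(a), S_{\ii\mathbf{1}_m}(b))\bigr) \le m_{\ii\mathbf{1}_m}$, but this open interval \emph{excludes} its left endpoint $S_{\ii\mathbf{1}_m}(a) = S_{\ii}(a)$, so it says nothing about a possible atom there. Your ``no slack'' argument has the same defect: summing the closed-set lower bounds $\mu^{(I)}(S_{\jj}([a,b])) \ge m_{\jj}$ over $\jj \in I_n$ and comparing with $\sum_{\jj} m_{\jj} = 1 = \mu^{(I)}(K_n^{(I)})$ only yields that the double-counted endpoint mass is $\ge 0$, which is the wrong direction. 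A workable repair for a shared endpoint $x = S_{\ii}(b) = S_{\ii'}(a)$ is to use an open interval straddling $x$ from both sides, namely $\bigl(S_{\ii r_1\cdots r_m}(a),\, S_{\ii'\mathbf{1}_m}(b)\bigr)$, whose $\mu_n^{(I)}$-mass equals $m_{\ii r_1\cdots r_m} + m_{\ii'\mathbf{1}_m}$ for large $n$ and which does contain $x$; Portmanteau then bounds the atom, provided these descendant weights tend to zero --- a point that is not entirely free in the generality of Section~\ref{constr frac}, though the paper itself glosses over the whole issue.
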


\begin{proof}
We write $\mu = \mu^{(I)}$, $\mu_n = \mu_n^{(I)}$, $n \in \mathbb{N}$. Let $K_{\ii} \coloneqq S_{\ii}([a,b])$ for $\ii \in I$.Let $\ii \in I_{n}$, $\jj \in I_{n+m}$, $n,m \in \mathbb{N}$. 
Because of 
\begin{align*}
K_{\ii} \cap K_{\jj}= 
\begin{cases}
K_{\jj},& \text{ if } \jj|_n = \ii \\
\emptyset,& \text{ otherwise},
\end{cases}
\end{align*}
we get
\begin{align*}
&\mu_{n+m}(K_{\ii}) \\[5pt]
=&\sum_{\jj \in I_{n+m}}m_{\jj} \, \mu_0\left(S_{\jj}^{-1}(K_{\ii})\right) \\[5pt]
=&\sum_{\jj \in I_{n+m} \atop \jj|_n = \ii } m_{\jj} \, \mu_0\left(S_{\jj}^{-1}(K_{\ii})\right).
\end{align*} 

Because of
\begin{align*}
&\left(S_{(i_1,...,i_n,j_{n+1},...,j_{n+m})})^{-1}(K_{\ii}\right) \\[5pt] = 
&\left(S_{j_{n+1}}^{((i_1,...,i_n))} \circ \cdots \circ S_{j_{n+m}}^{((i_1,...,i_n,j_{n+1},...,j_{n+m-1}))}\right)^{-1} \circ S_{\ii}^{-1}   (K_{\ii}) \\[5pt] = 
&\left(S_{j_{n+1}}^{((i_1,...,i_n))} \circ \cdots \circ S_{j_{n+m}}^{((i_1,...,i_n,j_{n+1},...,j_{n+m-1}))}\right)^{-1} ([a,b]) \\[5pt] =& [a,b],
\end{align*}

we get 
\begin{align*}
\mu_{n+m}(K_{\ii}) &= \sum_{\jj \in I_{n+m} \atop \jj|_n = \ii } m_{\jj} = m_{\ii}.
\end{align*}

\end{proof}
Analogously to \eqref{similarity} holds 
\begin{align}
\mu^{(I)} = \sum_{i=1}^{N_{\emptyset}} m_i^{(\emptyset)} \, S_i^{(\emptyset)} \mu^{(\theta_i I)}, \label{measure similarity}
\end{align}
where $S_i^{(\emptyset)} \mu^{(I)}(A) \coloneqq \mu^{(I)}\left(\left(S_i^{(\emptyset)}\right)^{-1}(A)\right)$, $A \in \mathcal{B}([a,b])$. 
\begin{proof}
Let $A \in \mathcal{B}([a,b])$. Then, we get
\begin{align*}
&\sum_{i=1}^{N_{\emptyset}} m_i^{(\emptyset)} \, \mu_{n}^{(\theta_i I)}\left(\left(S_i^{(\emptyset)}\right)^{-1}(A)\right) \\   
=& \sum_{i=1}^{N_{\emptyset}} \sum_{i_1=1}^{N_{i}} \cdots \sum_{i_n=1}^{N_{(i,i_1,...,i_{n-1})}} m_{i}^{(\emptyset)} \, m_{i_1}^{(i)} \cdots m_{i_{n}}^{((i,i_1,...,i_{n-1}))} \, \mu_0\left(\left(S_{i,i_1,...,i_{n}}\right)^{-1}(A)\right) \\
=& \sum_{i_1=1}^{N_{\emptyset}} \cdots \sum_{i_{n+1}=1}^{N_{(i_1,...,i_{n})}} m_{i_1}^{(\emptyset)} \, m_{i_2}^{(i)} \cdots m_{i_{n+1}}^{((i,i_1,...,i_{n}))} \, \mu_0\left(\left(S_{i_1,...,i_{n+1}}\right)^{-1}(A)\right) \\
=& \mu_{n+1}^{(I)}(A).
\end{align*}
Taking the limit, we get the assertion.
\end{proof}
With \eqref{measure similarity} we get the following lemma.
\begin{lemma} \label{scale measure}
Let $i \in \{1,...,N_{\emptyset}\}$ and $A \in \mathcal{B}([a,b])$ with $A \subseteq S_i^{(\emptyset)}([a,b])$. Then, it holds
\begin{align*}
\mu^{(I)}(A) = m_i^{(\emptyset)} \, (S_i^{(\emptyset)} \mu^{(\theta_i I)})(A).
\end{align*}
\end{lemma}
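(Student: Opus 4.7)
The plan is to deduce the claim from the self-similarity identity~\eqref{measure similarity}, which gives
\begin{align*}
\mu^{(I)}(A) = \sum_{k=1}^{N_\emptyset} m_k^{(\emptyset)} \, \mu^{(\theta_k I)}\!\left(\left(S_k^{(\emptyset)}\right)^{-1}(A)\right).
\end{align*}
It is therefore enough to show that every summand with $k \neq i$ vanishes; the remaining $k = i$ summand is exactly the desired expression $m_i^{(\emptyset)}(S_i^{(\emptyset)}\mu^{(\theta_i I)})(A)$.

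First I would invoke the ordering condition on the IFS $\s^{(\emptyset)}$: the inequalities $S_1^{(\emptyset)}(b) \le S_2^{(\emptyset)}(a) < \cdots$ imply that distinct images $S_i^{(\emptyset)}([a,b])$ and $S_k^{(\emptyset)}([a,b])$ share at most one common endpoint. Combined with $A \subseteq S_i^{(\emptyset)}([a,b])$, this forces $(S_k^{(\emptyset)})^{-1}(A)$ to be either empty or a single point in $\{a,b\}$.

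The main obstacle is then to verify $\mu^{(\theta_k I)}(\{a\}) = \mu^{(\theta_k I)}(\{b\}) = 0$, so that these singletons carry no mass and the $k \neq i$ summands truly disappear. I would prove this with a nested shrinking estimate using the subtree structure: the point $a$ lies in every leftmost cell $S_{(1,\ldots,1)}([a,b])$ of $\theta_k I$, and its $\mu^{(\theta_k I)}$-measure equals the corresponding product of left-most weights $m_1^{(\cdot)} < 1$. Under the standing regularity assumptions on the weight vectors, which keep the $m_1^{(\cdot)}$ uniformly below $1$, this product tends to $0$; a symmetric argument shrinking in from the right handles $b$. Substituting back into the self-similarity identity eliminates all $k \neq i$ contributions and leaves precisely $m_i^{(\emptyset)}(S_i^{(\emptyset)}\mu^{(\theta_i I)})(A)$, as required.
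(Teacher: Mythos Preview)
Your approach is exactly the paper's: the paper simply writes ``With \eqref{measure similarity} we get the following lemma'' and gives no further argument, so deducing the claim from the decomposition $\mu^{(I)} = \sum_k m_k^{(\emptyset)} S_k^{(\emptyset)}\mu^{(\theta_k I)}$ is precisely what is intended. You supply more detail than the paper by explicitly treating the possible shared endpoints of $S_i^{(\emptyset)}([a,b])$ and $S_k^{(\emptyset)}([a,b])$.

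One small point to clean up: your appeal to ``standing regularity assumptions on the weight vectors, which keep the $m_1^{(\cdot)}$ uniformly below $1$'' is not quite right, because no such uniform bound is imposed in Section~\ref{constr frac} (the conditions \eqref{A1}--\eqref{A5} belong to the homogeneous setting and are not assumed here). The simplest fix is to observe that the whole framework of the paper requires the measures $\mu^{(\theta_k I)}$ to be non-atomic (this is a standing hypothesis for the Krein--Feller operator, see Section~\ref{prelim}); that immediately yields $\mu^{(\theta_k I)}(\{a\}) = \mu^{(\theta_k I)}(\{b\}) = 0$ without any shrinking estimate. Alternatively, your nesting argument combined with the previous lemma does give $\mu^{(\theta_k I)}(\{a\}) \le \prod_{\ell=1}^n m_1^{(\cdot)}$, and in the random setting of Section~4.3 these factors are i.i.d.\ in $(0,1)$, so the product tends to $0$ almost surely; but for the deterministic lemma as stated the non-atomicity hypothesis is the cleaner justification.
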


\subsection{Scaling Properties.}
We establish a Dirichlet-Neumann-Bracketing with which we receive the characteristic $\phi$ for the C-M-J branching process under consideration. To this end, we need some scaling properties. 
\subsubsection{Scaling Property of the $\boldsymbol L_{\boldsymbol 2}$-Norm.}

\begin{lemma} \label{scaleprop E}
Let $f,g \in L_2\left(\mu^{(I)}\right)$. Then, 
\begin{align*}
\left\langle f,g \right\rangle_{L_2\left(\mu^{(I)}\right)} = \sum_{i=1}^{N_{\emptyset}} m_i^{(\emptyset)} \left\langle f \circ S_i^{(\emptyset)},g\circ S_i^{(\emptyset)} \right\rangle_{L_2\left(\mu^{\left(\theta_i I\right)}\right)}.
\end{align*}
\end{lemma}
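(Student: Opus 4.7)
\medskip

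The plan is to reduce the inner product to a sum of integrals over the pieces $S_i^{(\emptyset)}([a,b])$ and then push each piece forward via $S_i^{(\emptyset)}$. First I would write
\begin{align*}
\langle f, g \rangle_{L_2(\mu^{(I)})} = \int_{[a,b]} f g \, d\mu^{(I)} = \sum_{i=1}^{N_{\emptyset}} \int_{S_i^{(\emptyset)}([a,b])} f g \, d\mu^{(I)},
\end{align*}
which is justified because the sets $S_i^{(\emptyset)}([a,b])$ cover $[a,b]$ and overlap at most at the finitely many endpoints $S_i^{(\emptyset)}(b) = S_{i+1}^{(\emptyset)}(a)$, which carry $\mu^{(I)}$-mass zero. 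The latter follows from the fact that $\mu^{(I)}$ is non-atomic: for any point $x$, the mass $\mu^{(I)}(\{x\})$ is bounded above by $m_{\ii}$ for any $\ii\in I_n$ with $x\in S_{\ii}([a,b])$, and $m_{\ii} \to 0$ as $|\ii| \to \infty$ because each weight factor lies in $(0,1)$.

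Next, on each piece $A \subseteq S_i^{(\emptyset)}([a,b])$, Lemma \ref{scale measure} gives
\begin{align*}
\int_A f g \, d\mu^{(I)} = m_i^{(\emptyset)} \int_A f g \, d(S_i^{(\emptyset)} \mu^{(\theta_i I)}),
\end{align*}
and applying the standard change-of-variables formula for the pushforward measure $S_i^{(\emptyset)} \mu^{(\theta_i I)} = \mu^{(\theta_i I)} \circ (S_i^{(\emptyset)})^{-1}$ yields
\begin{align*}
\int_{S_i^{(\emptyset)}([a,b])} f g \, d(S_i^{(\emptyset)} \mu^{(\theta_i I)}) = \int_{[a,b]} (f \circ S_i^{(\emptyset)})(g \circ S_i^{(\emptyset)}) \, d\mu^{(\theta_i I)},
\end{align*}
since $S_i^{(\emptyset)}$ maps $[a,b]$ bijectively onto $S_i^{(\emptyset)}([a,b])$. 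Summing over $i=1,\ldots,N_{\emptyset}$ then produces exactly the right-hand side of the claimed identity.

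I do not expect a genuine obstacle here: the only delicate point is the overlap at the finitely many contact endpoints of consecutive pieces, and this is handled once we observe that $\mu^{(I)}$ is atomless. Everything else is a direct combination of Lemma \ref{scale measure} with the transformation rule for pushforward measures applied to the bounded measurable (in fact $L_2$) functions $f$ and $g$; standard density arguments (indicator $\to$ simple $\to$ $L_2$) ensure that the change-of-variables identity is applicable to general $f,g \in L_2(\mu^{(I)})$, noting that both sides are continuous bilinear forms in $(f,g)$.
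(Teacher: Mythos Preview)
Your proof is correct and follows essentially the same route as the paper: split the integral over $[a,b]$ into the pieces $S_i^{(\emptyset)}([a,b])$, invoke Lemma~\ref{scale measure} on each piece, and apply the change-of-variables formula for the pushforward measure. The only cosmetic difference is that the paper performs the change of variables before applying Lemma~\ref{scale measure} rather than after, and it justifies the splitting via $\supp \mu^{(I)} = K^{(I)}$ instead of your explicit atomlessness argument.
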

\begin{proof}
We have $\supp \mu^{(I)} = K^{(I)}$. Together with Lemma \ref{scale measure}, we get 

\begin{align*}
\langle f,g \rangle_ {L_2(\mu^{(I)})} &= \int_{[a,b]} f \, g \, d \mu^{(I)} \\
&= \sum_{i=1}^{N_{\emptyset}} \int_{S_i^{(\emptyset)}([a,b])} f\, g \, d \mu^{(I)}  \\
&= \sum_{i=1}^{N_{\emptyset}} \int_{[a,b]} f \circ S_i^{(\emptyset)} \, g \circ S_i^{(\emptyset)} \, d\left(S_i^{(\emptyset)^{-1}} \mu^{(I)}\right) \\
&= \sum_{i=1}^{N_{\emptyset}} m_i^{(\emptyset)} \int_{[a,b]} f  \circ S_i^{(\emptyset)} \, g \circ S_i^{(\emptyset)} \, d \mu^{\left(\theta_i I\right)} \\
&= \sum_{i=1}^{N_{\emptyset}} m_i^{(\emptyset)} \langle f\circ S_i^{(\emptyset)},g\circ S_i^{(\emptyset)} \rangle_{L_2\left(\mu^{(\theta_i I)}\right)}.
\end{align*}
\end{proof}

\subsubsection{Scaling of the Eigenvalue Counting Function - Neumann Boundary Conditions.}

Let $(\mathcal{E}^{(I)},\mathcal{F})$ be  the Dirichletform on $L(\mu^{(I)})$, whose eigenvalues coincide with the Neumann eigenvalues of $-\frac{d}{d \mu^{(I)}} \frac{d}{d x}$. Namely, 
\begin{align*}
\mathcal{F} &= H^1(\lambda), \\
\mathcal{E}(f,g) &= \int_a^b f'(x) \, g'(x) \, dx,
\end{align*}
see \cite[Proposition 5.1]{Fre04/05}.
We write $N^{(I)}_N$ for the eigenvalue counting function of $(\mathcal{E},\mathcal{F})$, instead of $N_{(\mathcal{E},\mathcal{F})}$. To obtain the Neumann-Dirichlet-Bracketing, we define a new Dirichlet form $\left(\tilde{\mathcal{E}}^{(I)},\tilde{\mathcal{F}}^{(I)}\right)$, introduced in \cite[Chapter 3]{Arz14}. Let $\tilde{\mathcal{F}}^{(I)}$ be the set of all functions $f: [a,b] \longrightarrow \mathbb{R}$ with $f \circ S_i^{(\emptyset)} \in \mathcal{F}$ for all $i = 1,..., N_{\emptyset}$ and $f \big|_{\left(S_i^{(\emptyset)}(b), S_{i+1}^{(\emptyset)}(a)\right)} \in H^1\left(\lambda, \left(S_i^{(\emptyset)}(b), S_{i+1}^{(\emptyset)}(b)\right)\right)$ for all $i=1,...,N_{\emptyset}-1$. With \cite[Proposition 3.2.1]{Arz14} follows $\mathcal{F} \subseteq \tilde{\mathcal{F}}^{(I)}$, but $\tilde{\mathcal{F}}^{(I)} \nsubseteq \mathcal{F}$, because $f \in \tilde{\mathcal{F}}^{(I)}$ has not to be continuous on the boundary points of $S_i^{(\emptyset)}([a,b])$, $i=1,...,N_{\emptyset}$. For all $f,g \in \tilde{\mathcal{F}}^{(I)}$, we define 
\begin{align*}
\tilde{\mathcal{E}}^{(I)}(f,g) \coloneqq \sum_{i=1}^{N_{\emptyset}} \frac{1}{r_i^{(\emptyset)}} \mathcal{E}\left(f \circ S_i^{(\emptyset)}, g \circ S_i^{(\emptyset)}\right) + \sum_{i=1}^{N_{\emptyset}-1} \int_{S_i^{(\emptyset)}(b)}^{S_{i+1}^{(\emptyset)}(a)} f'(t) \, g'(t) \, dt.
\end{align*}

Due to \cite[Proposition 3.2.1]{Arz14} we then have for all $f,g \in \mathcal{F}$, $\tilde{\mathcal{E}}^{(I)}(f,g) = \mathcal{E}(f,g)$. Further, \cite[Proposition 2.2.2]{Arz14} implies that the embedding $\tilde{F}^{(I)} \hookrightarrow L_2(\mu^{(I)})$ is a compact operator and thus we can refer to the eigenvalue counting function of the Dirichletform $\left(\tilde{\mathcal{E}}^{(I)},\tilde{\mathcal{F}}^{(I)}\right)$. From now on we suppress the $I$ dependence of the Dirichletform $\left(\tilde{\mathcal{E}}^{(I)},\tilde{\mathcal{F}}^{(I)}\right)$.

\begin{proposition}
For all $x \ge 0$  holds 
\begin{align*}
N_{(\tilde{\mathcal{F}},\tilde{\mathcal{E}})}(x) = \sum_{i=1}^{N_{\emptyset}} N^{(\theta_i I)}_N\left(r_i^{(\emptyset)} m_i^{(\emptyset)}x\right).
\end{align*}
\end{proposition}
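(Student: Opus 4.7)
The key observation is that the relaxed domain $\tilde{\mathcal{F}}$ permits discontinuities precisely at the boundaries of the cells $S_i^{(\emptyset)}([a,b])$, so the form $(\tilde{\mathcal{E}},\tilde{\mathcal{F}})$ decouples into $N_{\emptyset}$ cell forms plus $N_{\emptyset}-1$ gap forms. My plan is to reduce the spectral problem on $[a,b]$ to independent Neumann problems on each cell, then rescale each cell to $[a,b]$ via the IFS map.

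First I would write any $f\in\tilde{\mathcal{F}}$ as $f=(f_1,\dots,f_{N_{\emptyset}},g_1,\dots,g_{N_{\emptyset}-1})$, where $f_i$ is the restriction of $f$ to $S_i^{(\emptyset)}([a,b])$ and $g_j$ is the restriction to the $j$-th gap interval $(S_j^{(\emptyset)}(b),S_{j+1}^{(\emptyset)}(a))$. By the definition of $\tilde{\mathcal{F}}$ each $f_i\circ S_i^{(\emptyset)}\in H^1(\lambda)$ and each $g_j\in H^1(\lambda,\text{gap}_j)$, and the decomposition is orthogonal in the sense that $\tilde{\mathcal{E}}(f,f)=\sum_i \tfrac{1}{r_i^{(\emptyset)}}\mathcal{E}(f_i\circ S_i^{(\emptyset)},f_i\circ S_i^{(\emptyset)})+\sum_j\int_{\text{gap}_j}(g_j')^2\,dx$. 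Since $\mu^{(I)}$ is supported on $K^{(I)}\subseteq\bigcup_i S_i^{(\emptyset)}([a,b])$, the gaps carry no $\mu^{(I)}$-mass; by Lemma~\ref{scaleprop E} applied cell-wise the $L_2(\mu^{(I)})$-norm reads $\|f\|^2_{L_2(\mu^{(I)})}=\sum_i m_i^{(\emptyset)}\|f_i\circ S_i^{(\emptyset)}\|^2_{L_2(\mu^{(\theta_i I)})}$, with no gap contribution.

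Next I would argue that the gap pieces contribute nothing to the spectrum. Passing to the quotient by $\mu^{(I)}$-null modifications, the $L_2(\mu^{(I)})$-class of $f$ is determined by the $N_{\emptyset}$-tuple $(f_1,\dots,f_{N_\emptyset})$, and within each class the minimum of $\tilde{\mathcal{E}}$ is attained by choosing each $g_j$ to be \emph{constant} on its gap (which is permissible since $\tilde{\mathcal{F}}$ imposes no continuity at gap boundaries), yielding zero gap energy. Hence the quotient Dirichlet form on $L_2(\mu^{(I)})$ is the orthogonal direct sum
\begin{align*}
\bigoplus_{i=1}^{N_{\emptyset}}\Bigl(\tfrac{1}{r_i^{(\emptyset)}}\mathcal{E}(\,\cdot\circ S_i^{(\emptyset)},\,\cdot\circ S_i^{(\emptyset)}),\;\{f:f\circ S_i^{(\emptyset)}\in\mathcal{F}\}\Bigr),
\end{align*}
whose spectrum is the disjoint union (with multiplicities) of the cell spectra.

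Finally, on the $i$-th cell the eigenvalue equation $\tfrac{1}{r_i^{(\emptyset)}}\mathcal{E}(f\circ S_i^{(\emptyset)},g\circ S_i^{(\emptyset)})=\lambda\,m_i^{(\emptyset)}\langle f\circ S_i^{(\emptyset)},g\circ S_i^{(\emptyset)}\rangle_{L_2(\mu^{(\theta_i I)})}$ becomes, after substituting $\tilde f=f\circ S_i^{(\emptyset)}$ and $\tilde g=g\circ S_i^{(\emptyset)}$, exactly $\mathcal{E}(\tilde f,\tilde g)=\lambda r_i^{(\emptyset)}m_i^{(\emptyset)}\langle\tilde f,\tilde g\rangle_{L_2(\mu^{(\theta_i I)})}$, which is the Neumann eigenvalue problem for $-\tfrac{d}{d\mu^{(\theta_i I)}}\tfrac{d}{dx}$ on $[a,b]$ with eigenvalue $\lambda r_i^{(\emptyset)}m_i^{(\emptyset)}$. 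Counting yields $\#\{\lambda^{\text{cell }i}\le x\}=N^{(\theta_i I)}_N(r_i^{(\emptyset)}m_i^{(\emptyset)}x)$, and summing over $i$ gives the claim. The only delicate step is the gap decoupling: I must justify that passing to the $L_2(\mu^{(I)})$-quotient is spectrally faithful and that the minimizing gap extension lies in $\tilde{\mathcal{F}}$, both of which follow from the min-max principle combined with the explicit definition of $\tilde{\mathcal{F}}$.
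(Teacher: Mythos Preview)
Your argument is correct, but the route differs from the paper's. The paper establishes an explicit two-way eigenfunction correspondence: starting from an eigenfunction $f$ of $(\tilde{\mathcal{E}},\tilde{\mathcal{F}},\mu^{(I)})$ at level $\lambda$, it tests against cell-localized functions $\tilde h_j$ (defined to be $h\circ (S_j^{(\emptyset)})^{-1}$ on $S_j^{(\emptyset)}([a,b])$ and zero elsewhere) to extract that each $f\circ S_j^{(\emptyset)}$ is a Neumann eigenfunction for $\mu^{(\theta_j I)}$ at level $r_j^{(\emptyset)}m_j^{(\emptyset)}\lambda$; conversely it pastes cell eigenfunctions together, setting the gap values to zero, and verifies directly that the result is an eigenfunction of $(\tilde{\mathcal{E}},\tilde{\mathcal{F}})$. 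You instead argue structurally that, after passing to the $L_2(\mu^{(I)})$-quotient and minimizing out the gap energy (constant gap extensions, admissible precisely because $\tilde{\mathcal{F}}$ imposes no continuity at cell/gap interfaces), the form becomes an orthogonal direct sum of the rescaled cell Neumann forms, so the counting functions add. Both approaches rest on the same two ingredients---the absence of continuity constraints at the cell boundaries in $\tilde{\mathcal{F}}$ and the $\mu^{(I)}$-nullity of the gaps---and each leaves one routine step implicit (the paper the multiplicity bookkeeping from the eigenfunction bijection, you the identification of the spectrum of $(\tilde{\mathcal{E}},\tilde{\mathcal{F}})$ with that of its $L_2(\mu^{(I)})$-quotient form via min--max). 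Your direct-sum formulation is conceptually cleaner and more portable; the paper's hands-on eigenfunction calculation is more self-contained.
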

\begin{proof}
Let $f$ be an eigenfunction of $\left(\tilde{\mathcal{E}}, \tilde{\mathcal{F}},\mu^{(I)}\right)$ with eigenvalue $\lambda$, i.e. 
\begin{align*}
\et (f,g) = \lambda \, \langle f , g \rangle_{L_2\left(\mu^{(I)}\right)} ~~~ \text{ for all } g \in \tilde{\mathcal{F}}.
\end{align*}
Because $f,g \in L_2\left(\mu^{(I)}\right)$, we have with Lemma \ref{scaleprop E} 
\begin{align}
\begin{split}
&\sum_{i=1}^{N_{\emptyset}} \frac{1}{r_i^{(\emptyset)}} \mathcal{E}\left(f \circ S_i^{(\emptyset)}, g \circ S_i^{(\emptyset)}\right) + \sum_{i=1}^{N_{\emptyset}-1} \int_{S_i^{(\emptyset)}(b)}^{S_{i+1}^{(\emptyset)}(a)} f'(t) \, g'(t) \, dt \\  &= \lambda \, \sum_{i=1}^{N_{\emptyset}} m_i^{(\emptyset)} \, \left\langle f \circ S_i^{(\emptyset)},g \circ S_i^{(\emptyset)} \right\rangle_{L_2\left(\mu^{(\theta_i I)}\right)}. \label{efeq}
\end{split}
\end{align}
Now, we show that each summand on the left side equals each summand on the right side, respectively. Therefore, let $h \in \mathcal{F}$ and define for each $j \in \{1,...,N_{\emptyset}\}$
\begin{align*}
\tilde{h}_j(x) \coloneqq
\begin{cases}
h \circ S_j^{(\emptyset)^{-1}}(x), &\text{ if } x \in S_j^{(\emptyset)}([a,b]), \\
0, &\text{otherwise.}
\end{cases}
\end{align*}
Obviously, we have $\tilde{h}_j \in \ft, \tilde{h}_j \circ S_j^{(\emptyset)} = h$, for all $j \in \{1,...,N_{\emptyset}\}$ and $\tilde{h}_j \circ S_i^{(\emptyset)} = 0$ for $i \neq j$. Moreover, $\tilde{h}_j'\big|_{\left(S_i^{(\emptyset)}(b),S_{i+1}^{(\emptyset)}(a)\right)} = 0$, $j=1,...,N_{\emptyset}$, $i=1,..,N_\emptyset-1$. With $g = \tilde{h}_j$, we then have in \eqref{efeq}  
\begin{align*}
 \frac{1}{r_j^{(\emptyset)}} \, \mathcal{E}\left(f \circ S_j^{(\emptyset)}, h\right) = \lambda \, m_j^{(\emptyset)} \, \left\langle f \circ S_j^{(\emptyset)},h \right\rangle_{L_2\left(\mu^{\left(\theta_j I\right)}\right)}.
\end{align*}
Because this equation holds for all $h \in \mathcal{F}$, $f \circ S_j^{(\emptyset)}$ is an eigenfunction of the Dirichletform $\left(\mathcal{E}, \mathcal{F},\mu^{\left(\theta_jI\right)}\right)$ with eigenvalue  $r_j^{(\emptyset)}  m_j^{(\emptyset)}  \lambda$ for all $j = 1 ,..., N_{\emptyset}$.
\\\\
Now, let $\lambda > 0$, s.t. for $i=1,...,N_{\emptyset}$, $r_i^{(\emptyset)} m_i^{(\emptyset)} \lambda$ is an eigenvalue of $\left(\mathcal{E}, \mathcal{F},\mu^{(\theta_iI)}\right)$ with eigenfunction $f_i$, say. This means, 
\begin{align*}
\mathcal{E}(f_i,g) = r_i^{(\emptyset)} m_i^{(\emptyset)} \lambda \, \left\langle f_i, g \right\rangle_{L_2\left(\mu^{\left(\theta_i I\right)}\right)},
\end{align*}
for all $g \in \mathcal{F}$. Let
\begin{align*}
f(x) \coloneqq 
\begin{cases}
f_i \circ S_i^{(\emptyset)^{-1}}(x), &\text{if } x \in S_i^{(\emptyset)}([a,b]) \text{ for some } i \in \{1,...,N_{\emptyset}\} \\
0, &\text{otherwise}.
\end{cases}
\end{align*}
Then $f \in \ft$ and $f \circ S_i^{(\emptyset)} = f_i$, $i=1,...,N_{\emptyset}$ and therefore
\begin{align*}
\sum_{i=1}^{N_{\emptyset}} \frac{1}{r_i^{(\emptyset)}} \, \mathcal{E}\left(f \circ S_i^{(\emptyset)} ,g\right) =  \lambda \sum_{i=1}^{N_{\emptyset}} m_i^{(\emptyset)} \, \left\langle f \circ S_i^{(\emptyset)}, g \right\rangle_{L_2\left(\mu^{(\theta_i I)}\right)},
\end{align*}
for all $g \in \mathcal{F}$. Since for $\tilde{g} \in \ft$ we have by definition of $\ft$, $\tilde{g} \circ S_i^{(\emptyset)} \in\mathcal{F}$, $i=1,...,N_{\emptyset}$, we get
\begin{align*}
\sum_{i=1}^{N_{\emptyset}} \frac{1}{r_i^{(\emptyset)}} \, \mathcal{E}(f \circ S_i^{(\emptyset)} ,\tilde{g} \circ S_i^{(\emptyset)}) = \lambda \sum_{i=1}^{N_{\emptyset}} m_i^{(\emptyset)}  \, \left\langle f \circ S_i^{(\emptyset)}, \tilde{g} \circ S_i^{(\emptyset)} \right\rangle_{L_2\left(\mu^{(\theta_i I)}\right)}. 
\end{align*} 
But the left side of this equation is equal to $\et (f,\tilde{g})$, because $f'\big|_{\left(S_i^{(\emptyset)}(b),S_{i+1}^{(\emptyset)}(a))\right)} = 0$, for all $i=1,...,N_{\emptyset}-1$. With Lemma \ref{scaleprop E} we then have
\begin{align*}
\et (f,\tilde{g}) = \lambda  \, \langle f, \tilde{g} \rangle_{L_2\left(\mu^{(I)}\right)},
\end{align*}
for all $\tilde{g} \in \ft$. Therefore, $\lambda$ is an eigenvalue of $\left(\et , \ft , \mu^{(I)}\right)$ with corresponding eigenfunction $f$. Using this, we can easily conclude the claim. 
\end{proof}

\subsubsection{Scaling of the Eigenvalue Counting Function - Dirichlet Boundary Conditions.}

Let $(\mathcal{F}_0,\mathcal{E})$ be the Dirichlet form on $L_2\left(\mu^{(I)}\right)$ whose eigenvalues coincide with the Dirichlet eigenvalues of $-\frac{d}{d \mu^{(I)}} \frac{d}{d x}$. Meaning, $\mathcal{E}$ is defined as before and 
\begin{align*}
\mathcal{F}_0 \coloneqq \{f \in \mathcal{F}: ~ f(a)=f(b)=0\}.
\end{align*}
We write $N_D$ instead of $N_{(\mathcal{F}_0,\mathcal{E})}$. Again, we define a new Dirichletform $\left(\mathcal{E},\tilde{\mathcal{F}_0}^{(I)}\right)$ on $L_2\left(\mu^{(I)}\right)$ and suppress the $I$ dependence of 
\begin{align*}
\tilde{\mathcal{F}}^{(I)}_0 \coloneqq \left\{f \in F_0 : ~ f(x) = 0 \text{ for } x \in \left(S_i^{(\emptyset)}(b), S_{i+1}^{(\emptyset)}(a)\right),~ i=1,...,N_{\emptyset}-1\right\}.
\end{align*}
Further, we use the notation $\mathcal{E}$ for $\mathcal{E}\big|_{\ft _0 \times \ft _0}$.

\begin{proposition}
For all $x \ge 0$ we have
\begin{align*}
N_{\left(\mathcal{E},\ft_0,\mu^{(I)}\right)}(x) = \sum_{i=1}^{N_{\emptyset}} N_D^{(\theta_i I)} \left(r_i^{(\emptyset)}m_i^{(\emptyset)}x \right).
\end{align*}
\end{proposition}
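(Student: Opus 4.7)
The plan is to mirror the structure of the preceding Neumann-side proposition, with the Dirichlet boundary conditions built into $\tilde{\mathcal{F}}_0$ actually simplifying the argument. The key observation is that every $f \in \tilde{\mathcal{F}}_0$ is continuous on $[a,b]$, vanishes identically on each gap $\left(S_i^{(\emptyset)}(b), S_{i+1}^{(\emptyset)}(a)\right)$ by definition, and vanishes at $a$ and $b$ because $\tilde{\mathcal{F}}_0 \subseteq \mathcal{F}_0$. Hence $f$ vanishes at both endpoints of every cell $S_i^{(\emptyset)}([a,b])$, so each pulled-back piece $f \circ S_i^{(\emptyset)}$ already lies in $\mathcal{F}_0$ for the subtree $\theta_i I$, and the inter-cell gap integrals that complicated the Neumann case contribute nothing.

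Starting from an eigenfunction $f$ of $\left(\mathcal{E}, \tilde{\mathcal{F}}_0, \mu^{(I)}\right)$ with eigenvalue $\lambda$ and any $g \in \tilde{\mathcal{F}}_0$, Lemma \ref{scaleprop E} applied to the right-hand side of the eigenvalue equation (together with the vanishing of $f'$ and $g'$ on the gaps) yields
\begin{align*}
\sum_{i=1}^{N_{\emptyset}} \frac{1}{r_i^{(\emptyset)}} \, \mathcal{E}\left(f \circ S_i^{(\emptyset)}, g \circ S_i^{(\emptyset)}\right) = \lambda \sum_{i=1}^{N_{\emptyset}} m_i^{(\emptyset)} \left\langle f \circ S_i^{(\emptyset)}, g \circ S_i^{(\emptyset)} \right\rangle_{L_2\left(\mu^{(\theta_i I)}\right)}.
\end{align*}
Testing against $g = \tilde{h}_j$ that agrees with $h \circ \left(S_j^{(\emptyset)}\right)^{-1}$ on $S_j^{(\emptyset)}([a,b])$ (for $h \in \mathcal{F}_0$ for the subtree $\theta_j I$) and vanishes elsewhere, just as in the Neumann proof, isolates the $j$-th summand and shows that $f \circ S_j^{(\emptyset)}$ is an eigenfunction of $\left(\mathcal{E}, \mathcal{F}_0, \mu^{(\theta_j I)}\right)$ at eigenvalue $r_j^{(\emptyset)} m_j^{(\emptyset)} \lambda$ whenever it is not identically zero.

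Conversely, given eigenfunctions $f_i$ of each subtree form at eigenvalue $r_i^{(\emptyset)} m_i^{(\emptyset)} \lambda$, I glue them into a single function $f$ by setting $f = f_i \circ \left(S_i^{(\emptyset)}\right)^{-1}$ on $S_i^{(\emptyset)}([a,b])$ and $f = 0$ on the gaps. The Dirichlet boundary conditions on each $f_i$ guarantee continuity across every gap boundary and at $a$ and $b$, so $f$ lies in $\tilde{\mathcal{F}}_0$, and reversing the scaling computation confirms that $f$ is an eigenfunction of $\left(\mathcal{E}, \tilde{\mathcal{F}}_0, \mu^{(I)}\right)$ at eigenvalue $\lambda$. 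These two directions together produce a linear isomorphism between the $\lambda$-eigenspace of the full form and the direct sum of the $r_i^{(\emptyset)} m_i^{(\emptyset)} \lambda$-eigenspaces of the subtree forms; counting dimensions over all eigenvalues $\le x$ delivers the stated identity.

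The overall argument is a direct adaptation of the Neumann-case proof, so the main care-points are bookkeeping: verifying that the glued $f$ really sits in $\tilde{\mathcal{F}}_0$ rather than merely in $\mathcal{F}_0$ (this is precisely where the Dirichlet vanishing of each $f_i$ at the endpoints is used), and checking that the direct-sum decomposition does not miscount multiplicity when several subtree forms happen to share the value $r_i^{(\emptyset)} m_i^{(\emptyset)} \lambda$.
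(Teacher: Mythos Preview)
Your proof is correct and follows essentially the same approach as the paper's own argument: decompose the energy via \cite[Proposition 3.2.1]{Arz14} and the $L_2$ scaling Lemma \ref{scaleprop E}, test against localized functions $\tilde h_j$ to split an eigenfunction of $(\mathcal{E},\tilde{\mathcal{F}}_0,\mu^{(I)})$ into Dirichlet eigenfunctions of the subtree forms, and then glue subtree eigenfunctions back. If anything, your write-up is slightly more explicit than the paper's in two spots: you spell out why the gap integrals vanish (since functions in $\tilde{\mathcal{F}}_0$ are identically zero on the gaps) and you articulate the conclusion as a linear isomorphism between the $\lambda$-eigenspace and the direct sum of the $r_i^{(\emptyset)}m_i^{(\emptyset)}\lambda$-eigenspaces, which makes the multiplicity count transparent.
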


\begin{proof}
Let $f$ be an eigenfunction of $\left(\mathcal{E},\ft_0,\mu^{(I)}\right)$ with eigenvalue $\lambda$. Then
\begin{align*}
\mathcal{E}(f,g) = \lambda \, \langle f,g \rangle_{L_2\left(\mu^{(I)}\right)},
\end{align*}
for all $g \in \ft_0$.  Therefore, we have with \cite[Proposition 3.2.1]{Arz14} and Lemma \ref{scaleprop E},
\begin{align*}
&\sum_{i=1}^{N_{\emptyset}} \frac{1}{r_i^{(\emptyset)}} \, \mathcal{E}\left(f \circ S_i^{(\emptyset)}, g \circ S_i^{(\emptyset)}\right) + \sum_{i=1}^{N_{\emptyset}-1} \int_{S_i^{(\emptyset)}(b)}^{S_{i+1}^{(\emptyset)}(a)} f'(t) \, g'(t) \, dt \\  &= \lambda \, \sum_{i=1}^{N_{\emptyset}} m_i^{(\emptyset)} \, \langle f,g \rangle_{L_2\left(\mu^{(\theta_i I)}\right)}.
\end{align*}
For $h \in \mathcal{F}_0$ we define
\begin{align*}
\tilde{h}_j(x) \coloneqq
\begin{cases}
h \circ S_j^{(\emptyset)^{-1}}(x), &\text{if } x \in S_j^{(\emptyset)}([a,b]) \\
0, &\text{otherwise}.
\end{cases}
\end{align*}
Because $h \in \mathcal{F}_0$, it follows $\tilde{h}_j \in \ft_0$ and $\tilde{h}_j \circ S_j^{(\emptyset)} = h$ for $j=1,...,N_{\emptyset}$ and $\tilde{h}_j \circ S_i^{(\emptyset)} = 0$, if $i \neq j$. Hence,
\begin{align*}
\frac{1}{r_j^{(\emptyset)}} \, \mathcal{E}\left(f \circ S_j^{(\emptyset)}, h\right) = \lambda \, m_j^{(\emptyset)} \, \left\langle f \circ S_j^{(\emptyset)},h \right\rangle_{L_2\left(\mu^{\left(\theta_j I\right)}\right)},
\end{align*}
for all $j = 1,...,N_{\emptyset}$. Therefore, $\lambda \, r^{(\emptyset)}_i m_i^{(\emptyset)}$ is an eigenvalue of $\left(\mathcal{E},\mathcal{F}_0, \mu^{(\theta_i I)}\right)$ with eigenfunction $f \circ S_i^{(\emptyset)}$, $i=1,...,N_{\emptyset}$.
\\\\
Now, let $r_i^{(\emptyset)} m_i^{\emptyset)} \lambda$ be an eigenvalue of $\left(\mathcal{E},\mathcal{F}_0,\mu^{(\theta_i I)}\right)$ for some $\lambda >0$ with corresponding eigenfunction $f_i$, $i = 1,...,N_{\emptyset}$. Therefore, we have
\begin{align*}
\mathcal{E} (f_i,g) = r_i^{(\emptyset)} m_i^{(\emptyset)} \lambda \, \langle f_i,g  \rangle_{L_2\left(\mu^{(\theta_i I)}\right)} 
\end{align*}
for all $g \in \mathcal{F}_0$. Let 
\begin{align*}
f(x) \coloneqq 
\begin{cases}
f_i \circ S_i^{(\emptyset)^{-1}}(x), &\text{if } x \in S_i^{(\emptyset)}([a,b]), \text{ for some } i \in \{1,...,N_{\emptyset}\} \\
0, &\text{otherwise}.
\end{cases}
\end{align*}
Since $f_i \in \mathcal{F}_0$, we have $f \in \ft_0$ and because of $ f \circ S_i^{(\emptyset)} = f_i$, $i=1,...,N_{\emptyset}$, we have
\begin{align*}
&\sum_{i=1}^{N_{\emptyset}} \frac{1}{r_i^{(\emptyset)}} \, \mathcal{E}\left(f \circ S_i^{(\emptyset)}, g \right) = \lambda \, \sum_{i=1}^{N_{\emptyset}} m_i^{(\emptyset)} \, \left\langle f \circ S_i^{(\emptyset)},g \right\rangle_{L_2\left(\mu^{(\theta_i I)}\right)},
\end{align*}
for all $g \in \mathcal{F}_0$. For $\tilde{g} \in \ft_0$, we have $\tilde{g} \circ S_i^{(\emptyset)} \in \mathcal{F}_0$, $i=1,...,N_{\emptyset}$. Analogously to the case with Neumann boundary conditions we get
with \cite[Proposition 3.2.1]{Arz14} and Lemma \ref{scaleprop E},
\begin{align*}
\mathcal{E} (f,\tilde{g}) = \lambda  \, \langle f, \tilde{g} \rangle_{L_2\left(\mu^{(I)}\right)}.
\end{align*}
Hence, $\lambda$ is an eigenvalue of $(\mathcal{E},\tilde{\mathcal{F}}_0, \mu^{(I)})$ with eigenfunction $f$ and, as before, we can now easily conclude the claim.
\end{proof}

Since $\left(\et, \ft, \mu^{(I)}\right)$ is an extension of $\left(\mathcal{E}, \mathcal{F}, \mu^{(I)}\right)$ and $\left(\mathcal{E}, \mathcal{F}_0, \mu^{(I)}\right)$ is an extension of $\left(\mathcal{E},\tilde{\mathcal{F}}_0, \mu^{(I)}\right)$, we get the following corollary.

\begin{corollary} \label{counting scale}
For all $x \ge 0$ holds
\begin{align*}
\sum_{i=1}^{N_{\emptyset}} N_D^{(\theta_i I)} \left(r_i^{(\emptyset)} m_i^{(\emptyset)}x\right) \le N_D^{(I)}(x) \le  N^{(I)}_N(x)  \le \sum_{i=1}^{N_{\emptyset}} N^{(\theta_i I)}_N\left(r_i^{(\emptyset)} m_i^{(\emptyset)}x\right).
\end{align*}
\end{corollary}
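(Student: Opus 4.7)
The corollary follows immediately from the two preceding propositions once one invokes the monotonicity of the eigenvalue counting function under extensions of closed symmetric bilinear forms with compact resolvent. First I would record the three form inclusions that underlie the bracketing: $\mathcal{F} \subseteq \tilde{\mathcal{F}}$ with $\tilde{\mathcal{E}}\big|_{\mathcal{F} \times \mathcal{F}} = \mathcal{E}$ (noted in the paragraph preceding the Neumann scaling proposition), so $(\tilde{\mathcal{E}}, \tilde{\mathcal{F}})$ extends $(\mathcal{E}, \mathcal{F})$; $\tilde{\mathcal{F}}_0 \subseteq \mathcal{F}_0$ with $\mathcal{E}$ acting identically, so $(\mathcal{E}, \mathcal{F}_0)$ extends $(\mathcal{E}, \tilde{\mathcal{F}}_0)$; and, built into the definition of Dirichlet boundary conditions, $\mathcal{F}_0 \subseteq \mathcal{F}$, so $(\mathcal{E}, \mathcal{F})$ extends $(\mathcal{E}, \mathcal{F}_0)$.

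Next I would invoke the Courant--Fischer min--max characterization of eigenvalues. Whenever a non-negative closed symmetric form $(q_2, \mathcal{D}_2)$ on $L_2(\mu^{(I)})$ extends $(q_1, \mathcal{D}_1)$, i.e. $\mathcal{D}_1 \subseteq \mathcal{D}_2$ and $q_2 = q_1$ on $\mathcal{D}_1$, then the $n$-th eigenvalue of the self-adjoint operator associated with $(q_2, \mathcal{D}_2)$ is bounded above by the $n$-th eigenvalue associated with $(q_1, \mathcal{D}_1)$, because one infimizes the Rayleigh quotient over a larger family of $n$-dimensional subspaces. Translated to counting functions, this gives $N_{(q_2, \mathcal{D}_2)}(x) \ge N_{(q_1, \mathcal{D}_1)}(x)$ for every $x \ge 0$. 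The required hypotheses (non-negativity, closedness, compactness of the embedding into $L_2(\mu^{(I)})$) have already been established earlier in the paper via the citation of \cite[Proposition 2.2.2]{Arz14}, so this monotonicity applies to each of the three extension relations above.

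Chaining the three resulting inequalities with the two preceding propositions produces
\begin{align*}
\sum_{i=1}^{N_\emptyset} N_D^{(\theta_i I)}\bigl(r_i^{(\emptyset)} m_i^{(\emptyset)} x\bigr) = N_{(\mathcal{E}, \tilde{\mathcal{F}}_0)}(x) \le N_D^{(I)}(x) \le N_N^{(I)}(x) \le N_{(\tilde{\mathcal{E}}, \tilde{\mathcal{F}})}(x) = \sum_{i=1}^{N_\emptyset} N_N^{(\theta_i I)}\bigl(r_i^{(\emptyset)} m_i^{(\emptyset)} x\bigr),
\end{align*}
which is precisely the claim. The only point requiring care is bookkeeping the direction of the monotonicity, namely that enlarging the form domain can only decrease eigenvalues and therefore only increase the counting function at a given level $x$; beyond that observation, the corollary is a one-step consequence of the two scaling propositions and so I do not anticipate any genuine obstacle.
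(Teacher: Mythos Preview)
Your proposal is correct and follows exactly the paper's approach: the paper's entire argument is the sentence immediately preceding the corollary, noting that $(\tilde{\mathcal{E}},\tilde{\mathcal{F}})$ extends $(\mathcal{E},\mathcal{F})$ and $(\mathcal{E},\mathcal{F}_0)$ extends $(\mathcal{E},\tilde{\mathcal{F}}_0)$, and then invoking the two scaling propositions. You have simply made explicit the min--max/variational reason why extension of the form domain increases the counting function, which the paper leaves implicit.
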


\subsection{Spectral Asymptotics.}

We define a probability space $(\Omega, \mathcal{B}, \mathbb{P})$ in which every atomic event indicates a random tree $I$. Let $\left(\tilde{\Omega}, \tilde{\mathcal{B}}, \tilde{\mathbb{P}}\right)$ be a probability space and $\tilde{U}_{\ii}$, $\ii \in \mathcal{G}$ be i.i.d. $J$-valued random variables.
The probability space we are interested in is defined as in \eqref{prob space}, meaning

\begin{align*}
(\Omega,\mathcal{B},\mathbb{P}) = \prod_{\ii \in \mathcal{G}} (\Omega_{\ii}, \mathcal{B}_{\ii}, \mathbb{P}_{\ii}),
\end{align*}
whereby $(\Omega_{\ii}, \mathcal{B}_{\ii}, \mathbb{P}_{\ii})$ are copies of $(\tilde{\Omega}, \tilde{\mathcal{B}}, \tilde{\mathbb{P}})$. We set $U_{\ii} = \tilde{U}_{\ii} \circ P_{\ii}$, $\ii \in \mathcal{G}$, where $P_{\ii}$ is the projection map onto the $\ii$-th component.
 $\omega \in \Omega$ indicates a random tree $I(\omega)$. If $(i_1,...,i_n) = \ii \in \mathcal{G}$ is such that $N_{U_{(i_1,...,i_{n-1})}(\omega)} < i_n$, then in the infinite tree $I(\omega)$, the $i_n$-th child of $(i_1,...,i_{n-1})$ is never born, i.e. $\ii \notin I(\omega)$. If we refer to the Dirichlet/Neumann eigenvalue counting function, we write $N_{D/N}^{(\omega)}$ instead of $N_{D/N}^{(I(\omega))}$. Also, we write $\theta_{\ii} \omega$, if we mean the sub tree $\theta_{\ii} I(\omega)$ of $I(\omega)$, rooted at $\ii \in I(\omega)$. 
is measurable.\\

We consider C-M-J branching processes with
\begin{align*}
\left(\xi_{\ii},L_{\ii}\right) = \left(  \, \sum_{i=1}^{N_{U_{\ii}}} \delta_{-\log \left(r^{(U_{\ii})}_i \, m^{(U_{\ii})}_i\right)},\, \max_{i \in \{1,...,N_{U_{\ii}}\}}  -\log\left(r^{(U_{\ii})}_i\, m^{(U_{\ii})}_i\right)\right),
\end{align*}
whereby $\delta_{y}(\cdot)$ denotes the dirac delta function $\delta(\cdot -y)$. Let $(z_t)_t$ denote the C-M-J branching process to the random characteristic 
\begin{align*}
\hat{\phi}_{\ii}(t) \coloneqq \xi_{\ii}(\infty) - \xi_{\ii}(t).
\end{align*}  
Then $z_t$ denotes the number of individuals born after time $t$ to mothers born before or at time $t$. We assume that Condition \ref{CB1} and Condition \ref{CB2} are satisfied and thus there exists a random variable $W$ such that 
\begin{align*}
\lim_{t \rightarrow \infty } e^{- \alpha t} z_t = W \, \nu_\alpha^{\hat{\phi}}(\infty) ~~~ a.s., ~~~~~ \nu_\alpha^{\hat{\phi}} ( \infty ) \coloneqq \frac{\int_0^\infty e^{- \alpha t} \, \mathbb{E}(\hat{\phi}(t)) \, dt}{\int_0^\infty t \, d \nu_\alpha(t)},
\end{align*}
or there exists a periodic function $G_\alpha^{\hat{\phi}}$ such that
\begin{align*}
z_t = W \, e^{\alpha t} \left(G_\alpha^{\hat{\phi}} + o(1)\right) ~~~ a.s.
\end{align*}
If we assume that $\mathbb{E} N_{U_{\emptyset}}^2 < \infty$, we have
\begin{align*}
\mathbb{E}(\xi_\alpha(\infty) \, \log^+ \xi_\alpha(\infty))  < \infty.
\end{align*}
Hence, by Theorem \ref{W bigger zero}, $W>0$ a.s. For the rest of this chapter we denote by $W$ this random variable.

With Corollary \ref{counting scale} we have for each $x \ge 0$
\begin{align*}
\sum_{i=1}^{N_{U_\emptyset}} N_D^{(\theta_i \omega)} \left(r_i^{(U_\emptyset)} m_i^{(U_\emptyset)}x\right) \le N_D^{(\omega)}(x) \le  N^{(\omega)}_N(x)  \le \sum_{i=1}^{N_{U_\emptyset}} N^{(\theta_i \omega)}_N\left(r_i^{(U_\emptyset)} m_i^{(U_\emptyset)}x\right).
\end{align*}
We consider the scaling property
\begin{align*}
&\sum_{i=1}^{\xi_\emptyset(\infty)} N_D^{(\theta_i \omega)} (r_i^{(U_\emptyset)} m_i^{(U_\emptyset)}x) \le N_D^{(\omega)}(x).
\end{align*}
We suppress the $\omega$ dependence and define
\begin{align*}
X_D(t) \coloneqq N_D\left(e^t\right).
\end{align*}
Therefore, we have
\begin{align*}
&\sum_{i=1}^{\xi_\emptyset(\infty)} X_D(t- \sigma_i) \le X_D(t) ~~~ a.s.
\end{align*}
As in \cite{Ham00} we extend the branching processes to $\{X^{\phi}(t): ~ -\infty < t < \infty\}$, where
\begin{align*}
X^{\phi}(t) \coloneqq \sum_{\textbf{i} \in I} \phi_{\theta_\textbf{i} \omega}(t-\sigma_{\ii})
\end{align*}
and $\phi_\omega$ is defined for all $t \in \mathbb{R}$ and $\omega \in \Omega$. For our purposes it is enough that $\phi_\omega$ is bounded and $\phi_\omega(t) =0$ for all $t < t_0(\omega)$, for some $t_0(\omega) \in \mathbb{R}$. As for the C-M-J branching processes, we have
\begin{align}
X^\phi(t) = \phi_\omega(t) + \sum_{i=1}^{\xi_\emptyset(t)} \leftidx{_{(i)}} X^\phi (t-\sigma_i), \label{Xphi property}
\end{align} 
where $ \left\{\leftidx{_{(i)}} X^\phi (t)\right\}_t$, $i=1,...,\xi_\emptyset(\infty)$ are branching processes with characteristic $\phi$ with the assumption that the population has initial ancestor $(i)$. Moreover, $ \leftidx{_{(i)}} X^\phi $ are i.i.d. copies of $X^\phi$, distributed like $X^\phi$ and independent of $U_\emptyset$ and $\xi_\emptyset$. We will suppress $(i)$, if it will not cause confusion. \\
We want to give a representation of $X_D$ such that $X_D= X^{\phi}$ for some bounded $\phi$. Let

\begin{align*}
\eta(t) \coloneqq X_D(t) - \sum_{i=1}^{\xi_\emptyset(\infty)} X_D(t-\log \tau_1(i))
\end{align*}

and 

\begin{align*}
\tilde{\eta} (t) \coloneqq \eta(t) \mathbbm{1}_{\{t \ge 0\}} + \sum_{i=1}^{\xi(\infty)} \leftidx{_{(i)}}X_D(t- \sigma_{i}) \mathbbm{1}_{\{0 \le t < \sigma_i \}}.
\end{align*} 
Then, we have $X_D = X^\eta$ and $X^{\tilde{\eta}}(t) = \mathbbm{1}_{[0,\infty)}(t) \, X^{\eta}(t)$ and thus both processes have the same asymptotic behavior as $t$ tends to infinity.

\begin{lemma} \label{convergence X}
Assume that 
\begin{align*}
\mathbb{E} N_{U_\emptyset}^2 < \infty.
\end{align*}

 Then, the Malthusian parameter of the process $\{X_D(t): ~ t \in \mathbb{R}\}$ is the unique solution $\gamma > 0$ of
\begin{align*}
\mathbb{E}\left(\sum_{i=1}^{N_{U_\emptyset}} \, \left(r^{(U_\emptyset)}_i m^{(U_\emptyset)}_i\right)^{\gamma}\right) = 1.
\end{align*}
If $\nu$ is non-lattice, then 
\begin{align*}
\lim_{t \rightarrow \infty} X_D(t) \, e^{- \gamma t} = v_{\gamma}^{\tilde{\eta}} (\infty) \, W ~~~ a.s.,
\end{align*}
where 
\begin{align*}
v_{\gamma}^{\tilde{\eta}}(\infty) \coloneqq \frac{\int_{-\infty}^\infty e^{-\gamma t} \, \mathbb{E}(\tilde{\eta} (t)) \, dt }{\int_0^\infty t \, e^{-\gamma t} \, d \nu (t)}.
\end{align*}
If $\nu$ is lattice with period $T$, then
\begin{align*}
X_D(t) = (G_\gamma^{\tilde{\eta}}(t) +  o(1))\, e^{\gamma t}\, W ~~~ a.s.,
\end{align*}
where $G$ is a periodic function with period $T$, given by
\begin{align*}
G_\gamma^{\tilde{\eta}}(t) = T \cdot  \frac{\sum_{j=-\infty}^\infty e^{- \gamma (t+jT)} \, \mathbb{E}(\tilde{\eta}(t+jT))}{\int_0^\infty t \, e^{-\gamma t} \, d \nu(t)}.
\end{align*}
\end{lemma}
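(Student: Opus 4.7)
The plan is to identify $X_D$ with the C-M-J branching process $X^{\tilde{\eta}}$ and then apply Theorem \ref{strong law} to obtain the limit.

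First I would compute the Malthusian parameter. Since $\xi_\emptyset = \sum_{i=1}^{N_{U_\emptyset}} \delta_{-\log(r_i^{(U_\emptyset)} m_i^{(U_\emptyset)})}$, a direct calculation yields
\begin{align*}
\int_0^\infty e^{-\alpha t}\, d\nu(t) = \mathbb{E}\!\left(\sum_{i=1}^{N_{U_\emptyset}} e^{-\alpha\bigl(-\log(r_i^{(U_\emptyset)} m_i^{(U_\emptyset)})\bigr)}\right) = \mathbb{E}\!\left(\sum_{i=1}^{N_{U_\emptyset}} \bigl(r_i^{(U_\emptyset)} m_i^{(U_\emptyset)}\bigr)^\alpha\right).
\end{align*}
The right-hand side is continuous and strictly decreasing in $\alpha$ (since each $r_i^{(U_\emptyset)} m_i^{(U_\emptyset)} \in (0,1)$), tends to $\mathbb{E}(N_{U_\emptyset}) \geq 2 > 1$ as $\alpha \downarrow 0$, and tends to $0$ as $\alpha \uparrow \infty$. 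Hence there is a unique $\gamma > 0$ solving $\mathbb{E}(\sum_{i=1}^{N_{U_\emptyset}}(r_i^{(U_\emptyset)} m_i^{(U_\emptyset)})^\gamma) = 1$, and $\int_0^\infty u\, e^{-\gamma u}\, d\nu(u) < \infty$ is immediate because $\xi$ is supported on $[-\log \sup (r_i m_i), -\log \inf(r_i m_i)]$, which is contained in a compact set under the (implicit) boundedness assumptions on the $r_i^{(j)} m_i^{(j)}$.

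Next I would verify the representation $X_D = X^{\tilde{\eta}}$ with the same asymptotics. By definition $\eta(t) = X_D(t) - \sum_{i=1}^{\xi_\emptyset(\infty)} \leftidx{_{(i)}}X_D(t - \log \tau_1(i))$, so iterating \eqref{Xphi property} shows $X_D = X^\eta$. The modification to $\tilde{\eta}$ zeroes out the part that depends on descendants born after time $t$ (so that $\tilde{\eta}$ becomes a well-behaved characteristic), and by construction $X^{\tilde{\eta}}(t) = \mathbbm{1}_{[0,\infty)}(t)\, X^\eta(t)$, so the two processes coincide for $t \geq 0$. I would then check that $\tilde{\eta}$ is bounded: Corollary \ref{counting scale} (applied inductively) shows that $\eta(t)$ is the number of eigenvalues "produced" by the first generation birth event, which is bounded by $N_{U_\emptyset}$; together with the indicator cutoff this makes $\tilde{\eta}$ uniformly bounded by a multiple of $N_{U_\emptyset}$.

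Now I would check Conditions \ref{CB1} and \ref{CB2}. Condition \ref{CB1} follows because $\xi_\gamma(\infty) - \xi_\gamma(t)$ is bounded by $\xi_\gamma(\infty) \leq N_{U_\emptyset} \cdot C$ for a deterministic constant $C$, so any rapidly decaying $g$ concentrated on a sufficiently large interval works together with $\mathbb{E} N_{U_\emptyset}^2 < \infty$. Condition \ref{CB2} follows similarly from the boundedness of $\tilde{\eta}$ and the fact that $\tilde{\eta}(t) = 0$ for $t$ outside a bounded set (up to descendants arriving before $t$, whose contribution is controlled by the same branching bound). The hypothesis $\mathbb{E} N_{U_\emptyset}^2 < \infty$ guarantees $\mathbb{E}(\xi_\gamma(\infty) \log^+ \xi_\gamma(\infty)) < \infty$, so Theorem \ref{W bigger zero} yields $W > 0$ a.s.

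Finally, applying Theorem \ref{strong law} to $X^{\tilde{\eta}}$: in the non-lattice case we obtain $e^{-\gamma t} X_D(t) \to \nu_\gamma^{\tilde{\eta}}(\infty) W$ a.s., where
\begin{align*}
\nu_\gamma^{\tilde{\eta}}(\infty) = \frac{\int_{-\infty}^\infty e^{-\gamma t}\, \mathbb{E}(\tilde{\eta}(t))\, dt}{\int_0^\infty t\, e^{-\gamma t}\, d\nu(t)},
\end{align*}
and in the lattice case we obtain the periodic $G_\gamma^{\tilde{\eta}}$ with the displayed formula. The main obstacle I expect is the careful verification of Conditions \ref{CB1} and \ref{CB2} for $\tilde{\eta}$; once boundedness and support considerations are handled, the strong law delivers the result directly.
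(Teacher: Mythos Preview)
Your overall strategy coincides with the paper's: write $X_D = X^{\tilde\eta}$, verify Conditions \ref{CB1} and \ref{CB2}, and invoke Theorem \ref{strong law}. The Malthusian-parameter computation is essentially identical (the paper justifies finiteness of $\int_0^\infty t\,e^{-\gamma t}\,d\nu(t)$ via $\mathbb{E}N_{U_\emptyset}<\infty$ together with the boundedness of $x\mapsto -x^\gamma\log x$ on $(0,1)$, not via compact support of $\xi$, which is not assumed here).

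Where your argument has genuine gaps is in the control of $\tilde\eta$. First, the bound on $\eta$ is not a direct ``eigenvalues produced at generation one'' count; the paper combines Corollary \ref{counting scale} with $N_D(x)\le N_N(x)\le N_D(x)+2$ (\cite[Proposition 5.5]{Fre04/05}) to obtain
\[
0 \le \eta(t) \le \sum_{i=1}^{N_{U_\emptyset}}\Bigl(N_N^{(\theta_i I)}(r_i^{(\emptyset)} m_i^{(\emptyset)} e^t)-N_D^{(\theta_i I)}(r_i^{(\emptyset)} m_i^{(\emptyset)} e^t)\Bigr)\le 2N_{U_\emptyset}.
\]
Second, your claim that $\tilde\eta(t)=0$ outside a bounded set is false: the term $\eta(t)\mathbbm{1}_{\{t\ge 0\}}$ persists for all $t\ge 0$. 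The paper instead takes $h(t)=e^{-\gamma t}$ in Condition \ref{CB2}, so that $e^{-\gamma t}\tilde\eta(t)/h(t)=\tilde\eta(t)$ and one only needs $\sup_t\tilde\eta(t)\le cN_{U_\emptyset}$ with $\mathbb{E}N_{U_\emptyset}<\infty$. Third, bounding the second piece of $\tilde\eta$, namely $\sum_{i=1}^{\xi(\infty)}\leftidx{_{(i)}}X_D(t-\sigma_i)\mathbbm{1}_{\{0\le t<\sigma_i\}}$, requires an a priori estimate on $X_D$ at negative arguments; the paper imports the deterministic bound $X_D(t)\le \tilde c\,e^t$ from \cite[Lemma 4.10]{Min18}, which gives $\leftidx{_{(i)}}X_D(t-\sigma_i)\le \tilde c$ on $\{t<\sigma_i\}$ and hence $\tilde\eta(t)\le cN_{U_\emptyset}$. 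You did not supply this ingredient, and without it the bound on $\tilde\eta$---and therefore Condition \ref{CB2}---is not established.
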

\begin{proof}
Let 
\begin{align*}
f(s) \coloneqq \mathbb{E}\left(\sum_{i=1}^{N_{U_\emptyset}} \left(r^{(U_\emptyset)}_i m^{(U_\emptyset)}_i\right)^s\right).
\end{align*}
By dominated convergence, we see $f: [0,\infty) \longrightarrow \mathbb{R}$ is continuous and because $r^{(j)}_i m^{(j)}_i < 1$ for all $j \in J$, $i=1,...,N_j$, $f$ is strictly decreasing. Because $N_j \ge 2$, $j \in J$, we have
\begin{align*}
f(0) \ge  2
\end{align*}
and 
\begin{align*}
\lim_{s \rightarrow \infty} f(s) = 0.
\end{align*}
By continuity, there exists $\gamma >0 $ such that $f(\gamma) = 1$. Furthermore, $\gamma$ is the unique solution strictly bigger than zero and also the Malthusian Parameter of the general branching process under consideration.
The first moment of $\nu_\gamma$ is finite, since $\mathbb{E}N_{U_\emptyset} < \infty$. With $g(t) = t^{-2} \wedge 1$ Condition \ref{CB1} is satisfied since
\begin{align*}
\mathbb{E} \left( \sup_{t \ge 0} \frac{\xi_\gamma(\infty)- \xi_\gamma(t)}{g(t)}  \right) \le\mathbb{E} \left( \sup_{t \ge 0}\int_t^\infty \frac{1}{g(s)} \, d\xi_\gamma(s) \right) \le \sup_{t \ge 0} \left\{(1 \vee t^2) e^{- \gamma t} \right\} \mathbb{E} N_{U_{\emptyset}} < \infty.
\end{align*}
By \cite[Lemma 4.10]{Min18} there exists a deterministic constant $\tilde{c}>0$ such that
\begin{align}
X_D(t) \le \tilde{c} \, e^t. \label{X bound}
\end{align}
Further, from the Dirichlet-Neumann-bracketing follows that 
\begin{align*}
0 \le \eta (t) \le \sum_{i=1}^{N_{U_\emptyset}} \left( N_N^{(\theta_i I )}\left(r_i^{(\emptyset)} m_i^{(\emptyset)} e^t \right) -  N_D^{(\theta_i I )}\left(r_i^{(\emptyset)} m_i^{(\emptyset)} e^t\right) \right).
\end{align*}
With \cite[Proposition 5.5]{Fre04/05}
\begin{align*}
N_D(x) \le N_N(x) \le N_D(x) + 2,
\end{align*}
we thus receive
\begin{align}
\eta (t) \le 2 N_{U_\emptyset}. \label{eta bound}
\end{align}
Taking together \eqref{X bound} and \eqref{eta bound}, we receive
\begin{align*}
\tilde{\eta}(t) \le c N_{U_\emptyset},
\end{align*}
for some deterministic $c>0$. Therefore, Condition \ref{CB2} follows with $h(t) = e^{- \gamma t}$. The Lemma then follows from Theorem \ref{strong law}.

\end{proof}

\begin{theorem} \label{main theorem 1}
Assume that 
\begin{align*}
\mathbb{E} N_{U_\emptyset}^2 < \infty
\end{align*}
and let $\gamma > 0 $ be the unique solution of
\begin{align*}
\mathbb{E}\left(\sum_{i=1}^{N_{U_\emptyset}} \, \left(r^{(U_\emptyset)}_i m^{(U_\emptyset)}_i\right)^{\gamma}\right) = 1.
\end{align*}
Then, 
\begin{enumerate}
\item
If $\nu$ is non-lattice, then 
\begin{align*}
\lim_{x \rightarrow \infty} N_{D/N}(x) \, x^{- \gamma } = \nu_{\gamma}^{\tilde{\eta}} (\infty) \, W, ~~~ a.s.,
\end{align*}
where 
\begin{align*}
\nu_{\gamma}^{\tilde{\eta}}(\infty) \coloneqq \frac{\int_{-\infty}^\infty e^{-\gamma t} \, \mathbb{E}(\tilde{\eta} (t)) \, dt }{\int_0^\infty t \, e^{-\gamma t} \, d \nu (t)}.
\end{align*}
\item 
If the support of $\nu$ lies in a discrete subgroup of $\mathbb{R}$, then
\begin{align*}
N_{D/N}(x) = (G(\log(x)) +  o(1))\, x^{\gamma}\, W, ~~~ a.s.,
\end{align*}
where $G$ is a periodic function with period $T$, given by
\begin{align*}
G(t) = T \cdot  \frac{\sum_{j=-\infty}^\infty e^{- \gamma (t+jT)} \, \mathbb{E}(\tilde{\eta}(t+jT))}{\int_0^\infty t \, e^{-\gamma t} \, d \nu(t)}.
\end{align*}
\end{enumerate}

\end{theorem}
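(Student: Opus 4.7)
The plan is to deduce Theorem \ref{main theorem 1} essentially as a direct corollary of Lemma \ref{convergence X}, combined with the standard comparison between Neumann and Dirichlet eigenvalue counting functions for a Krein-Feller operator on an interval. Since by definition $X_D(t) = N_D(e^t)$, Lemma \ref{convergence X} already encodes the almost-sure asymptotics of $N_D$, and only a change of variables $x = e^t$ plus a small perturbation argument are needed to finish.

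For the Dirichlet part, I would substitute $x = e^t$ into the conclusion of Lemma \ref{convergence X}. In the non-lattice case this converts $X_D(t)\, e^{-\gamma t} \to \nu_\gamma^{\tilde{\eta}}(\infty)\, W$ a.s.\ into $N_D(x)\, x^{-\gamma} \to \nu_\gamma^{\tilde{\eta}}(\infty)\, W$ a.s., which is exactly part (i) of the theorem for $N_D$. In the lattice case, the periodic function $G_\gamma^{\tilde{\eta}}$ reparametrizes via $t = \log x$ as $G(\log x) := G_\gamma^{\tilde{\eta}}(\log x)$, and one immediately reads off $N_D(x) = (G(\log x) + o(1))\, x^\gamma\, W$ a.s., with $G$ of period $T$ in its argument.

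For the Neumann part, I would invoke the comparison $N_D^{(I)}(x) \le N_N^{(I)}(x) \le N_D^{(I)}(x) + 2$ of \cite[Proposition 5.5]{Fre04/05}, which is the very same ingredient used inside the proof of Lemma \ref{convergence X} to bound $\eta$. Multiplying by $x^{-\gamma}$ and using that $2\, x^{-\gamma} \to 0$ while the Dirichlet quantity tends to the strictly positive limit $\nu_\gamma^{\tilde{\eta}}(\infty)\, W > 0$ (since $W > 0$ a.s., which is guaranteed under the hypothesis $\mathbb{E} N_{U_\emptyset}^2 < \infty$ via Theorem \ref{W bigger zero}), the non-lattice limit transfers verbatim to $N_N$. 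In the lattice case the additive $+2$ is $O(1) = o(x^\gamma\, W)$ a.s.\ and is absorbed into the $o(1)$ factor, giving the same periodic asymptotic for $N_N$.

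There is essentially no obstacle here: the analytic heavy lifting, namely the identification of the Malthusian parameter through the characteristic equation, the verification of Conditions \ref{CB1}--\ref{CB2}, and the appeal to the strong law of large numbers of Theorem \ref{strong law}, has already been carried out in Lemma \ref{convergence X}. The only point that needs a little care is to check that the constant gap of $2$ between the two counting functions is genuinely negligible on the scale $x^\gamma$; this is immediate from $\gamma > 0$ and the a.s.\ strict positivity of $W$.
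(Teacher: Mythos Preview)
Your proposal is correct and follows exactly the same approach as the paper: the Dirichlet case is obtained from Lemma \ref{convergence X} via the substitution $x = e^t$, and the Neumann case follows from the bracketing $N_D(x) \le N_N(x) \le N_D(x) + 2$ of \cite[Proposition 5.5]{Fre04/05}. Your write-up in fact supplies more detail than the paper on why the additive constant $2$ is asymptotically negligible, but the argument is the same.
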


\begin{proof}
For the Dirichlet eigenvalue counting function, we simply rescale Lemma \ref{convergence X} by $x = \log(t)$ and hence the claim follows. The assertion for the Neumann eigenvalue counting function follows from the identity
\begin{align*}
N_D(x) \le N_N(x) \le N_D(x) + 2,
\end{align*}
see \cite[Proposition 5.5]{Fre04/05}.
\end{proof}

\subsection{Comparison between Random Recursive and Random Homogeneous Cantor Measures.} \label{compare}
We have seen the construction of the recursive Cantor sets and the corresponding recursive Cantor measures. Then, we randomized these sets and measures and showed that under some  regularity conditions the spectral exponent for the corresponding Krein-Feller-operator is almost surely given by the unique solution $\gamma_r > 0$ of 
\begin{align*}
\mathbb{E}\left(\sum_{i=1}^{N_{U_\emptyset}} \, \left(r^{(U_\emptyset)}_i m^{(U_\emptyset)}_i\right)^{\gamma_r}\right) = 1.
\end{align*}

In Theorem \ref{hom asym} we recalled the results of \cite{Arz14} about the spectral asymptotics for Krein-Feller-operators w.r.t. random homogeneous Cantor measures. The next proposition relates $\gamma_r$ to $\gamma_h$, where we assume that conditions \eqref{A1}-\eqref{A5} are satisfied.
\begin{proposition}
With the notation above and in Theorem \ref{hom asym}, we have $\gamma_h \le \gamma_r$ and equality if and only if there exists $\alpha > 0$ such that
\begin{align}
\sum_{i=1}^{N_j}\left(r_i^{(j)} m_i^{(j)} \right)^\alpha = 1, ~~~ \text{for all }j\in J. \label{cond eq h and r}
\end{align}
\end{proposition}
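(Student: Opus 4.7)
The plan is to reduce both equations to a comparison of two averages of the same family of functions and then apply Jensen's inequality. For each $j\in J$ set
\begin{align*}
F_j(s)\;\coloneqq\;\sum_{i=1}^{N_j}\bigl(r_i^{(j)}m_i^{(j)}\bigr)^{s}.
\end{align*}
Because $r_i^{(j)}m_i^{(j)}\in(0,1)$ and $N_j\ge 2$, each $F_j$ is strictly decreasing and continuous on $(0,\infty)$ with $F_j(0)\ge 2$ and $F_j(s)\to 0$ as $s\to\infty$. With this notation the defining equations become
\begin{align*}
\sum_{j\in J}p_j\log F_j(\gamma_h)=0,\qquad \sum_{j\in J}p_j F_j(\gamma_r)=1.
\end{align*}

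First I would prove the inequality. By the concavity of $\log$ and Jensen's inequality applied to the probability measure $(p_j)_{j\in J}$ on $J$,
\begin{align*}
\sum_{j\in J}p_j\log F_j(\gamma_r)\;\le\;\log\sum_{j\in J}p_j F_j(\gamma_r)\;=\;\log 1\;=\;0.
\end{align*}
Since $s\mapsto\sum_j p_j\log F_j(s)$ is strictly decreasing (as a positive combination of strictly decreasing functions, using \eqref{A1}--\eqref{A5} to justify differentiation under the sum when $|J|=\infty$), the equation $\sum_j p_j\log F_j(s)=0$ has the unique root $\gamma_h$, and the displayed inequality forces $\gamma_h\le\gamma_r$.

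Next I would treat equality. Equality in Jensen's inequality for a strictly concave function is equivalent to the integrand being constant on the support of the measure, so $\gamma_h=\gamma_r$ forces $F_j(\gamma_r)$ to take a common value $c$ for all $j$ with $p_j>0$. Plugging this into $\sum_j p_j F_j(\gamma_r)=1$ yields $c=1$, so $\alpha\coloneqq\gamma_r$ satisfies $F_j(\alpha)=1$ for every $j$ in the support of $(p_j)$; for any remaining $j\in J$ with $p_j=0$ the hypothesis is a separate assumption one builds into \eqref{cond eq h and r}. Conversely, if $F_j(\alpha)=1$ for all $j\in J$, then $\alpha$ satisfies both defining equations simultaneously, and uniqueness gives $\gamma_h=\gamma_r=\alpha$.

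The main thing to be careful about is the Jensen step: the strict concavity of $\log$ gives the equality characterisation only on the support of the weights $(p_j)$, so one must either restrict attention to $\{j: p_j>0\}$ throughout or interpret \eqref{cond eq h and r} as assumed only for such $j$. Apart from this bookkeeping, every step reduces to monotonicity of $F_j$ and a single application of Jensen's inequality.
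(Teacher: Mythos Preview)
Your proof is correct and follows essentially the same approach as the paper: define $F_j(s)=\sum_i (r_i^{(j)}m_i^{(j)})^s$, apply Jensen's inequality for $\log$ to the probability weights $(p_j)$, and conclude via the strict monotonicity of the two averages. The only cosmetic difference is that the paper applies Jensen at $s=\gamma_h$ (so the left-hand side vanishes and one argues with the decrease of $s\mapsto\log\sum_j p_jF_j(s)$), whereas you apply it at $s=\gamma_r$ (so the right-hand side vanishes and you argue with the decrease of $s\mapsto\sum_j p_j\log F_j(s)$); your treatment of the equality case and of the support issue is in fact more careful than the paper's.
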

\begin{proof}
Let $x_j(\alpha) \coloneqq \sum_{i=1}^{N_j}\left(r_i^{(j)} m_i^{(j)} \right)^\alpha$, $j \in J$. With Jensen's inequality, we receive
\begin{align*}
\sum_{j \in J } p_j\, \log\left(x_j(\alpha) \right) \le \log\left(\sum_{j \in J } p_j \, x_j(\alpha) \right). 
\end{align*}
Since $\log$ is strictly increasing, we have equality if and only if $x_i(\alpha) = x_j(\alpha)=1$ for all $i,j \in J$. 
Now, let \eqref{cond eq h and r} not be satisfied. Then,
\begin{align*}
0 = \sum_{j \in J } p_j\, \log\left(x_j(\gamma_h) \right)< \log\left(\sum_{j \in J } p_j \, x_j(\gamma_h) \right). 
\end{align*}
As $\log\left(\sum_{j \in J } p_j \, x_j(\alpha) \right)$ decreases as $\alpha$ increases, the assertion follows.
\end{proof}

\begin{remark}
If $U_{\ii} = U_{\jj}$ for all $\ii, \jj \in I$ such that $|\ii| = |\jj|$, then the corresponding recursive Cantor measure is homogeneous. However, Theorem \ref{main theorem 1} makes no statement about the spectral asymptotics w.r.t. homogeneous Cantor measures, since the probability that $\mu^{(I)}$ is homogeneous is 0. 
\end{remark}

\begin{example} Let $J$ be countable and $p_j \coloneqq \mathbb{P}\left( U_\emptyset = j) \in (0,1\right)$, $j \in J$. Further, assume that $r_1^{(j)} = ... = r_{N_j}^{(j)}$, $m_1^{(j)} = ... = m_{N_j}^{(j)}$ for all $j \in J$. Therefore, $m_i^{(j)} = \frac{1}{N_j}$ $i=1,...,N_j$ for all $j \in J$. Let $r \coloneqq r_{U_\emptyset}$ and $N \coloneqq N_{U_\emptyset}$. If the conditions \eqref{A1}-\eqref{A5} are satisfied, then the spectral exponent for the Krein-Feller-operator w.r.t. the corresponding random homogeneous Cantor measure is given by 
\begin{align*}
\gamma_h \coloneqq \frac{\mathbb{E} \log N}{\mathbb{E}\log(N/r)}, 
\end{align*}
see \cite[Page 64]{Arz14}. The spectral exponent for the Krein-Feller-operator w.r.t. the corresponding random recursive Cantor measure is given by the unique solution $\gamma_r > 0$ of
\begin{align*}
\mathbb{E}\left( N \, (r/N)^{\gamma_r}\right) = 1.
\end{align*}
If not $ (r/N)^{\alpha} = 1/N$ for some $\alpha > 0$, for almost all $\omega \in \Omega$, we thus have
\begin{align*}
0 = \log\left( \sum_{j \in J} p_j \, N_j \, \left(r^{(j)}_1/N_j\right)^{\gamma_r}\right) < \sum_{j \in J} p_j  \log\left( N_j \, \left(r_1^{(j)}/N_j\right)^{\gamma_r} \right) = \mathbb{E}\log\left( N \, (r/N)^{\gamma_r} \right). 
\end{align*}
Therefore,
\begin{align*}
\gamma_h =  \frac{\mathbb{E} \log N}{\mathbb{E}\log(N/r)} < \gamma_r.
\end{align*}
\end{example}
Coming back to the $\frac{1}{3}$-$\frac{1}{5}$-recursive Cantor set from the introduction and let $p=\frac{3}{5}$, $m_1^{(1)}= m_2^{(1)} = \frac{1}{2}$, $m_1^{(2)} = m_2^{(2)} = m_3^{(2)} = \frac{1}{3}$. Then, the spectral exponent for the Krein-Feller-operator w.r.t. the corresponding random recursive Cantor measure is given as the unique solution $\gamma_r > 0$ of
\begin{align*}
\left(\frac{1}{6} \right)^{\gamma_r} + \left(\frac{1}{15}\right)^{\gamma_r} = \frac{5}{6}.
\end{align*}
Numerically, we get $\gamma_r \approx 0.396403$.

\end{document}